\newtheorem{theorem}{Theorem}[section]
\newtheorem{lemma}[theorem]{Lemma}
\newtheorem{proposition}[theorem]{Proposition}
\newtheorem{corollary}[theorem]{Corollary}
\theoremstyle{remark}
\newtheorem{remark}[theorem]{Remark}
\theoremstyle{definition}
\DeclareMathOperator{\reg}{reg}
\DeclareMathOperator{\coc}{cochord}
\DeclareMathOperator{\mat}{mat}
\DeclareMathOperator{\pd}{pd}
\DeclareMathOperator{\indm}{indm}
\DeclareMathOperator{\bpt}{BPT}
\DeclareMathOperator{\tree}{TREE}
\begin{document}

\title{The size of Betti tables of edge ideals arising from bipartite graphs}

\author[N. Erey]{Nursel Erey}
\author[T. Hibi]{Takayuki Hibi}
\address{Nursel Erey, Gebze Technical University, Department of Mathematics, 41400 Kocaeli, Turkey}
\email{nurselerey@gtu.edu.tr}

\address{Takayuki Hibi, Department of Pure and Applied Mathematics, Graduate School of Information Science and Technology, Osaka University, Suita, Osaka 565--0871, Japan}
\email{hibi@math.sci.osaka-u.ac.jp}

\subjclass[2020]{05C69, 05C70, 05E40, 13D02}

\keywords{bipartite graph, Castelnuovo-Mumford regularity, edge ideal, matching number, projective dimension}

\begin{abstract}
 Let $\pd(I(G))$ and $\reg(I(G))$ respectively denote the projective dimension and the regularity of the edge ideal $I(G)$ of a graph $G$. For any positive integer $n$, we determine all pairs $(\pd(I(G)),\, \reg(I(G)))$ as $G$ ranges over all connected bipartite graphs on $n$ vertices. 
\end{abstract}

\maketitle

\section{Introduction}
Let $G$ be a finite simple graph with the vertex set $V(G)=\{x_1,\dots ,x_n\}$. Let $S=\Bbbk[x_1,\dots ,x_n]$ be the polynomial ring in $n$ variables over a field $\Bbbk$. The \textit{edge ideal} of $G$, denoted by $I(G)$, is the monomial ideal generated by the monomials $x_ix_j$ such that $\{x_i,x_j\}$ is an edge of $G$. Edge ideals of bipartite graphs were studied in the literature for several purposes. Fernández-Ramos and Gimenez \cite{FG} gave a characterization of bipartite graphs whose edge ideal has regularity $3$. When $G$ is an unmixed bipartite graph, Kummini \cite{Ku} described the regularity of $I(G)$ in terms of the induced matching number of $G$ and Kimura \cite{K2} gave a combinatorial description of the projective dimension of $I(G)$ via complete bipartite subgraphs satisfying certain conditions. Van Tuyl \cite{VT} provided a formula for the regularity of the edge ideal of a sequentially Cohen-Macaulay bipartite graph in terms of the induced matching number of the graph. Jayanthan et al. \cite{JNS} computed the regularity of powers of edge ideals for several subclasses of bipartite graphs. Herzog and Hibi \cite{HH_bipartite} classified all bipartite graphs which are Cohen-Macaulay and Van Tuyl and Villarreal \cite{VTV} classified those which are shellable.

In a recent article, Hà and Hibi \cite{HaHibi} considered the following problem: Given a graph $G$ on $n$ vertices, what are the possible values of $(\pd(S/I(G)),\, \reg(S/I(G)))$? They determined all such pairs when $\pd(S/I(G))$ attains its minimum possible value $2\sqrt{n}-2$ or when $\reg(S/I(G))$ attains its minimum possible value $1$. Hibi et al. \cite{HKKMVT} determined all tuples consisting of the values of depth, regularity, dimension and the degree of the $h$-polynomial of $S/I(G)$ as $G$ ranges over all Cameron-Walker graphs on $n$ vertices. Similar type of problems were studied recently in \cite{FKVT, HKM, HKMVT, HMVT}. 

In this article, we determine all pairs $(\pd(I(G)),\, \reg(I(G)))$ as $G$ ranges over all connected bipartite graphs on $n$ vertices. To state our main result precisely, for any positive integer $n$ we denote by $\bpt(n)$ the set of connected bipartite graphs on the vertices $\{x_1,\dots ,x_n\}$. We define
	\[\displaystyle \bpt_{\pd}^{\reg}(n)=\{(\pd(S/I(G)),\reg(S/I(G))): G \in \bpt(n)\}
\]
which is the set of sizes of Betti tables of $S/I(G)$ as $G$ ranges over all connected bipartite graphs on $n$ vertices. Our main result is then the following theorem:
\begin{theorem}[Theorem~\ref{thm:main theorem}]
	Let $n\geq 4$ be an integer. Then
	\[\displaystyle \bpt_{\pd}^{\reg}(n)=\{(p,r)\in \mathbb{Z}^2 : 1\leq r <  \Big\lfloor \frac{n}{2}\Big\rfloor, \, \Big\lceil \frac{n}{2} \Big\rceil \leq p \leq n-2 \}\cup \{(n-1,1)\} \cup A_n \]
	where $A_n=\emptyset$ if $n$ is even and, $A_n=\{(\lceil n/2 \rceil, \lfloor n/2 \rfloor)\}$ if $n$ is odd.
\end{theorem}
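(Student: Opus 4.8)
The plan is to prove the two inclusions separately: the containment $\bpt_{\pd}^{\reg}(n)\subseteq(\cdots)$ by bounding $\pd$ and $\reg$ and their interaction, and the reverse containment by exhibiting, for each admissible pair, a connected bipartite graph on $n$ vertices realizing it. For the bounds, fix a connected bipartite $G$ on $n\ge 4$ vertices with parts $X,Y$. First I would record the elementary estimates: $1\le \reg(S/I(G))$ (since $G$ has an edge) and $\reg(S/I(G))\le \mat(G)\le \lfloor n/2\rfloor$ (the matching bound). For projective dimension I would use $\pd(S/I(G))\ge \operatorname{bight}(I(G))$, the big height, which equals $n-i(G)$ where $i(G)$ is the least size of a maximal independent set; since each of $X,Y$ is a maximal independent set (no isolated vertices), $i(G)\le\min(|X|,|Y|)\le\lfloor n/2\rfloor$ and hence $\pd(S/I(G))\ge\lceil n/2\rceil$. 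Finally $\pd(S/I(G))\le n-1$, because $G$ has no isolated vertex forces $\mathfrak m\notin\mathrm{Ass}(S/I(G))$ and so $\operatorname{depth}(S/I(G))\ge 1$.

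The two \emph{coupling} constraints are the heart of the necessity direction. The clean one rules out $(n-1,r)$ with $r\ge 2$: via Hochster's formula applied to the independence complex $\mathrm{Ind}(G)$ (whose Stanley--Reisner ideal is $I(G)$), one has $\beta_{n-1}(S/I(G))\ne 0$ iff $\tilde H_0(\mathrm{Ind}(G))\ne 0$, iff the $1$-skeleton of $\mathrm{Ind}(G)$ — which is exactly the complement graph $\overline G$ — is disconnected; as $\beta_n=0$ always, this gives $\pd(S/I(G))=n-1$ iff $\overline G$ is disconnected. For triangle-free $G$ a disconnected complement forces $G=K_{a,b}$: vertices in distinct $\overline G$-components are adjacent in $G$, and triangle-freeness forbids edges inside a part. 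Hence $\pd=n-1\Rightarrow G=K_{a,b}\Rightarrow\reg=1$, which simultaneously excludes $(n-1,r\ge 2)$ and supplies the isolated point $(n-1,1)$. The second coupling is the extremal regularity case $\reg(S/I(G))=\lfloor n/2\rfloor$: then $\reg=\mat=\lfloor n/2\rfloor$, so $G$ has a (near-)perfect matching that is extremal for regularity. Analyzing graphs with $\reg=\mat$ shows this forces a near-perfect induced matching; for even $n$ this would be a perfect induced matching, making $G$ a disjoint union of edges and hence disconnected (contradiction, so $A_n=\emptyset$), while for odd $n$ the structure is rigid enough to pin $\pd$ down to $\lceil n/2\rceil$, yielding exactly the point of $A_n$. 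Combining the four estimates carves out precisely the rectangle $[\lceil n/2\rceil,n-2]\times[1,\lfloor n/2\rfloor-1]$ together with $(n-1,1)$ and $A_n$.

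For sufficiency I would realize the rectangle by a two-parameter family. The idea is to attach an induced-matching ``regularity gadget'' of $r$ pairwise non-interacting pendant edges (forcing $\reg=r$) to a complete-bipartite ``projective-dimension gadget'' whose size is tuned to reach the prescribed $p$, joined by a bridge so the whole graph stays connected and bipartite, with any leftover vertices absorbed as whiskers to keep the vertex count at $n$. The values are then verified by the standard edge- and vertex-splitting recursions for $\reg$ and $\pd$, together with the disjoint-union additivities $\reg(S/I(G_1\sqcup G_2))=\reg(S/I(G_1))+\reg(S/I(G_2))$ and $\pd(S/I(G_1\sqcup G_2))=\pd(S/I(G_1))+\pd(S/I(G_2))$ used at the splitting steps (an induction on $n$ is a natural way to organize this). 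The two exceptional points are handled directly: $K_{a,b}$ with $a+b=n$ gives $(n-1,1)$, and for odd $n=2k+1$ the spider tree with center $v$ joined to $a_1,\dots,a_k$ and pendant edges $a_ib_i$ is a tree (hence sequentially Cohen--Macaulay), so $\reg=\indm=k=\lfloor n/2\rfloor$ and $\pd=\operatorname{bight}(I(G))=k+1=\lceil n/2\rceil$, realizing $A_n$.

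The hard part will be twofold. First, the extremal regularity coupling: turning $\reg=\mat=\lfloor n/2\rfloor$ into a genuine structural statement (near-perfect induced matching, then incompatibility with connectivity for even $n$ and the forced value $\pd=\lceil n/2\rceil$ for odd $n$) is the delicate structural input, since $\reg$ and $\mat$ need not coincide in general. Second, and most laborious, is the exact computation of $(\pd,\reg)$ for the filling family — especially near the corner $(n-2,\lfloor n/2\rfloor-1)$, where both invariants must be large \emph{simultaneously}. Trees cannot reach this corner, because a small $i(G)$ (needed for large $\pd$) confines every vertex near a two-point dominating set and thereby caps $\indm$; so one is forced into non-tree constructions and a careful bookkeeping through the splitting recursions. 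By contrast, the elementary bounds and the $\pd=n-1$ coupling are comparatively routine.
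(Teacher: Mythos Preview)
Your outline matches the paper's strategy closely: coarse bounds, two coupling constraints at the boundary, and explicit constructions to fill the interior. The $\pd=n-1$ argument via Hochster and disconnected complement is exactly the paper's Proposition~\ref{prop: pd max value}, and your spider tree is the paper's witness for $A_n$ when $n$ is odd.

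There is one genuine gap. In the extremal regularity step you write that ``analyzing graphs with $\reg=\mat$ shows this forces a near-perfect induced matching,'' and for even $n$ you infer a perfect induced matching and hence disconnectedness. But $\reg=\mat$ does \emph{not} by itself force the maximum matching to be induced, and you give no mechanism for this implication. The paper supplies one: for connected $G$ on even $n\ge 4$ with $\mat(G)=n/2$, one builds an explicit co-chordal cover of size $n/2-1$ (Theorem~\ref{thm:even case}), whence $\reg(S/I(G))\le\coc(G)<n/2$ by Woodroofe's bound. This co-chordal cover idea (or an equivalent device) is the missing ingredient; once you have it, the odd case (Theorem~\ref{thm: main theorem odd case}) follows by the splitting recursion together with the even case applied to $G-x$ and $G-N[y]$, after which Corollary~\ref{cor: odd regularity} pins down $\pd$.

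On the sufficiency side, your ``complete-bipartite gadget plus pendant-edge gadget joined by a bridge'' is plausible but underspecified, and verifying both invariants simultaneously through splitting recursions is exactly where such constructions tend to leak. The paper avoids this by splitting the rectangle along the line $p+r=n$: for $p\le n-r$ it uses trees (Theorem~\ref{thm: trees (r,p) description}), where $\reg=\indm$ and $\pd=\tau_{\max}$ are combinatorial and can be read off directly; for $n-r<p\le n-2$ (which forces $r\ge 3$) it gives a single explicit non-tree bipartite graph (Theorem~\ref{thm: bipartite construction}) and checks both invariants by one application of Lemma~\ref{lem: key lemma} at a well-chosen vertex, with the residual graphs being forests. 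Your intuition that trees cannot reach the corner $(n-2,\lfloor n/2\rfloor-1)$ is correct and is made precise by the paper's Lemma~\ref{lem: trees lemma} ($\tau_{\max}\le n-\indm$ for trees), proved by a short counting argument rather than via domination. If you pursue your gadget construction, you will likely end up re-deriving something close to the graph in Theorem~\ref{thm: bipartite construction}; it is worth writing that graph down explicitly rather than arguing by recursion.
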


We make use of the graph parameters (induced) matching number, co-chordal cover number and maximum size of minimal vertex covers to bound the regularity and projective dimension. Along the way, we describe all the pairs $(\pd(I(G)),\, \reg(I(G)))$ as $G$ ranges over all trees on $n$ vertices.
	
\section{Preliminaries}
\subsection{Graph theory background}
Given a finite simple graph $G$, we denote by $V(G)$ and $E(G)$ respectively the vertex set and the edge set of $G$. We say a vertex $u$ is \textit{neighbor} of (or, \textit{adjacent} to) another vertex $v$ if $\{u,v\}\in E(G)$. We denote by $N(u)$ the set consisting of all neighbors of $u$ in $G$. We define $N[u]$ by $N[u]=N(u)\cup \{u\}$. We call a vertex \textit{isolated} if it has no neighbors.

A graph $H$ is called a \textit{subgraph} of $G$ if $V(H)\subseteq V(G)$ and $E(H)\subseteq E(G)$. A subgraph $H$ of $G$ is called an \textit{induced subgraph}  if for any two vertices $u,v$ in $H$, $\{u,v\}\in E(H)$ if $\{u,v\}\in E(G)$. If $U$ is a subset of $V(G)$, we define the induced subgraph of $G$ on $U$ as the subgraph whose vertex set is $U$ and whose edge set is  $\{\{x,y\} : x,y\in U \text{ and } \{x,y\} \in E(G)\}$. Moreover, for any $W\subseteq V(G)$, we denote by $G-W$ the induced subgraph of $G$ on $V(G)\setminus W$. To simplify the notation, if $W=\{x\}$ consists of a single vertex, then we write $G-x$ for $G-W$. The \textit{complement} of $G$, denoted by $G^c$, is a graph that has the same vertices as $G$ such that $\{x,y\}\in E(G^c)$ if and only if $\{x,y\}\notin E(G)$.

A graph $G$ is called \textit{connected} if for every pair of vertices $x$ and $y$, there is a path in $G$ that starts at $x$ and ends at $y$. A maximal connected subgraph of $G$ is called a \textit{connected component} of $G$. We say $G$ is a \textit{forest} if $G$ has no cycle subgraphs. A connected forest is called a \textit{tree}. It is well-known that every tree on $n$ vertices has exactly $n-1$ edges. An \textit{independent set} in $G$ is a subset of vertices which contain no edges of $G$. A \textit{bipartite} graph is a graph that contains no odd cycles. The vertex set of a bipartite graph can be partitioned to two independent sets. A bipartite graph $G$ with vertex bipartition $V(G)=A\cup B$ is called a \textit{complete bipartite graph} if every vertex in $A$ is adjacent to every vertex in $B$. A graph is called \textit{chordal} if it has no induced cycles of length greater than three. A graph $G$ is called \textit{co-chordal} if $G^c$ is chordal.

A \textit{matching} of $G$ is a collection of edges which are pairwise disjoint. The \textit{matching number} of $G$, denoted by $\mat(G)$, is defined by
\[\mat(G)=\max\{|M|: M \text{ is a matching of } G\}.\]
A matching $M=\{e_1,\dots ,e_k\}$ of $G$ is called an \textit{induced matching} of $G$ if the induced subgraph of $G$ on $\cup_{i=1}^ke_i$ has exactly $k$ edges.  The \textit{induced matching number} of $G$, denoted by $\indm(G)$, is the maximum cardinality of an induced matching of $G$. A \textit{perfect matching} of $G$ is a matching $M$ such that each vertex of $G$ belongs to some edge in $M$. The \textit{co-chordal cover number} of $G$, denoted by $\coc(G)$, is the minimum number of co-chordal subgraphs required to cover the edges of $G$, i.e.,
\[\displaystyle \coc(G)=\min\{r: E(G)=\bigcup_{i=1}^rE(H_i), \text{ each }H_i \text{ is a co-chordal subgraph of } G\}.\]

For any positive integer $n$, we denote $\{1,\dots ,n\}$ by $[n]$.

A \textit{vertex cover} $C$ of a graph $G$ is a subset of vertices such that every edge of $G$ contains a vertex from $C$. A vertex cover is called \textit{minimal} if no proper subset of it is a vertex cover. The maximum cardinality of a minimal vertex cover of $G$ is denoted by $\tau_{\max}(G)$.
\subsection{Algebra background}
Let $G$ be a graph with the vertex set $V(G)=\{x_1,\dots ,x_n\}$. Let $\Bbbk$ be a field and let $S=\Bbbk[x_1,\dots ,x_n]$ be the polynomial ring in $n$ variables over $\Bbbk$. The \textit{edge ideal} of $G$, denoted by $I(G)$, is the monomial ideal defined by
\[I(G)=(x_ix_j : \{x_i,x_j\} \text{ is an edge of } G).\]

Let $M$ be a finitely generated graded $S$-module. Then $M$ has a minimal graded free resolution of the form 

\begin{equation*}\label{eq:resolution}
0 \longrightarrow \bigoplus_{j \in \mathbb{Z}} S(-j)^{b_{p,j}(M)} \longrightarrow \cdots\longrightarrow 
\bigoplus_{j \in \mathbb{Z}}S(-j)^{b_{0,j}(M)} \longrightarrow M \longrightarrow 0 .
\end{equation*}
The numbers $b_{i,j}(M)$ are called the \textit{graded Betti numbers} of $M$. The \textit{projective dimension} of $M$, denoted by $\pd(M)$, is defined by
\[\pd(M)=\max\{i: b_{i,j}(M)\neq 0 \text{ for some } j\}.\]
The \textit{(Castelnuovo-Mumford) regularity} of $M$, denoted by $\reg(M)$, is defined by
\[\reg(M)=\max\{j-i: b_{i,j}(M)\neq 0 \}.\]

	\begin{theorem}\cite[Corollary~3.8]{BBHsurvey}\label{thm:regularity formula connected components}
		If $G$ is a graph with connected components $G_1,\dots ,G_r$, then
		\[\reg(S/I(G))=\sum_{i=1}^{r}\reg(S/I(G_i)).\]
	\end{theorem}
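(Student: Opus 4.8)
The plan is to reduce the statement to the additivity of regularity under the $\Bbbk$-algebra tensor product, exploiting that the connected components of $G$ have pairwise disjoint vertex sets. By induction on the number $r$ of components it suffices to treat $r=2$, so write $G$ as the disjoint union of $G_1$ and $G_2$. Let $R$ (resp.\ $T$) be the polynomial ring on the vertices of $G_1$ (resp.\ $G_2$); since $V(G_1)$ and $V(G_2)$ partition $V(G)$, we have $S = R \otimes_\Bbbk T$ as graded $\Bbbk$-algebras. Because every edge of $G$ lies inside a single component, the generators of $I(G)$ split into those of $I(G_1)\subseteq R$ and those of $I(G_2)\subseteq T$, yielding a graded $\Bbbk$-algebra isomorphism
\[
S/I(G) \;\cong\; \bigl(R/I(G_1)\bigr) \otimes_\Bbbk \bigl(T/I(G_2)\bigr).
\]
I would also record that regularity is unaffected by viewing $I(G_i)$ inside the larger ring $S$: as $S$ is free (hence flat) over $R$, base change along $R\hookrightarrow S$ sends the minimal graded free resolution of $R/I(G_1)$ to that of $S/I(G_1)S$ while keeping its entries in the homogeneous maximal ideal, so the graded Betti numbers, and thus the regularities, coincide.

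The heart of the argument is the behavior of resolutions under $\otimes_\Bbbk$. Let $F_\bullet \to R/I(G_1)$ and $P_\bullet \to T/I(G_2)$ be minimal graded free resolutions over $R$ and $T$, respectively. I would show that the total complex $F_\bullet \otimes_\Bbbk P_\bullet$ is a minimal graded free resolution of $\bigl(R/I(G_1)\bigr)\otimes_\Bbbk\bigl(T/I(G_2)\bigr)$ over $S$. Exactness is immediate: since $\Bbbk$ is a field, $-\otimes_\Bbbk-$ is exact, so the relevant Künneth formula carries no higher $\operatorname{Tor}$ terms and the tensor product of two resolutions is again a resolution. Minimality is inherited because each differential of $F_\bullet$ (resp.\ $P_\bullet$) has entries in the homogeneous maximal ideal of $R$ (resp.\ $T$), whence every differential of the total complex has entries in the homogeneous maximal ideal of $S$. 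Counting free summands in each internal degree $j$ then gives the multiplicative Betti-number formula
\[
b_{i,j}(S/I(G)) \;=\; \sum_{\substack{i_1+i_2=i\\ j_1+j_2=j}} b_{i_1,j_1}\bigl(R/I(G_1)\bigr)\,b_{i_2,j_2}\bigl(T/I(G_2)\bigr).
\]

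Finally I would deduce additivity of regularity from this identity. A nonzero summand forces $b_{i_1,j_1}(R/I(G_1))\neq 0$ and $b_{i_2,j_2}(T/I(G_2))\neq 0$ with $j-i=(j_1-i_1)+(j_2-i_2)$; maximizing over such pairs gives $\reg(S/I(G))=\reg(R/I(G_1))+\reg(T/I(G_2))$, and the presence of a top slope Betti number in each factor guarantees the maximum is attained. Combining this with the invariance under ring extension noted above, and inducting on $r$, completes the proof. The only point needing genuine care---and the step I expect to be the main obstacle to phrase cleanly---is the verification that minimality survives $\otimes_\Bbbk$; once the tensor decomposition $S/I(G)\cong\bigotimes_{i}S/I(G_i)$ is in place, everything else is formal.
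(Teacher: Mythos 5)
Your proof is correct: the Künneth exactness over the field $\Bbbk$, the minimality of the tensored resolution (differentials $d_F\otimes 1\pm 1\otimes d_P$ with entries in the maximal ideal of $S$), the flat base change handling $\reg(S/I(G_i))$ versus $\reg(R/I(G_1))$, and the nonnegativity of Betti numbers ensuring the extremal term survives in the convolution formula are all handled properly. Note that the paper offers no proof of this statement at all---it is quoted as a known result from \cite[Corollary~3.8]{BBHsurvey}---and your tensor-product/Betti-number argument is precisely the standard proof underlying that citation, so your write-up is essentially the canonical one rather than a divergent route.
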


The following bounds on the regularity and projective dimension of edge ideals are well-known, see for example \cite[Lemma~3.1]{DHS} and \cite[Lemma~3.2]{DS}.
\begin{lemma}\cite{DHS, DS}\label{lem: key lemma}
	For any vertex $x$ of a graph $G$, the short exact sequence 
	\[0 \rightarrow \frac{S}{I(G):(x)}(-1) \rightarrow \frac{S}{I(G)} \rightarrow \frac{S}{I(G)+(x)} \rightarrow 0\]
	gives the following bounds for the regularity and projective dimension:
	\begin{enumerate}
		\item $\reg(S/I(G))\leq \max\{\reg(S/I(G-x)), \, \reg(S/I(G-N[x]))+1\}$,
		\item $\pd(S/I(G))\leq \max\{\pd(S/I(G-x))+1, \, \pd(S/I(G-N[x]))+|N(x)|\}$.
	\end{enumerate}
\end{lemma}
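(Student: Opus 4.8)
The plan is to derive both inequalities from the displayed short exact sequence together with the standard facts that for any short exact sequence $0\to A\to B\to C\to 0$ of finitely generated graded $S$-modules one has $\reg(B)\le\max\{\reg(A),\reg(C)\}$ and $\pd(B)\le\max\{\pd(A),\pd(C)\}$; both follow at once from the long exact sequences in $\operatorname{Tor}$. I would first verify that the sequence is exact. The left-hand map is multiplication by $x$, whose kernel on $S$ is by definition $(I(G):(x))$, so it induces an injection $(S/(I(G):(x)))(-1)\hookrightarrow S/I(G)$ whose image is the submodule generated by the class of $x$; the right-hand map is the canonical surjection onto $S/(I(G)+(x))$, whose kernel is precisely that image. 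The grading shift $(-1)$ records that multiplication by $x$ raises degrees by one.

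The heart of the argument is to identify the two outer modules combinatorially. First I would show $I(G)+(x)=I(G-x)+(x)$: every generator $x_ix_j$ of $I(G)$ meeting $x$ becomes a multiple of $x$, while the remaining generators are exactly the edges of $G-x$. Second I would show $(I(G):(x))=I(G-N[x])+(y:y\in N(x))$: for a monomial $m$, the product $xm$ lies in $I(G)$ precisely when $m$ is divisible by some edge of $G$ or by some neighbor $y$ of $x$, and after adjoining the neighbors the only surviving edge generators are those joining two vertices outside $N[x]$, namely the edges of $G-N[x]$ (the vertex $x$ itself becomes isolated).

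With these identifications in hand, the outer regularities and projective dimensions reduce to those of $G-x$ and $G-N[x]$, using the invariance of $\reg$ and $\pd$ under adjoining polynomial variables together with their behaviour modulo a regular sequence of variables. Writing $N(x)=\{y_1,\dots,y_d\}$ with $d=|N(x)|$, the linear forms $y_1,\dots,y_d$ form a regular sequence on $S/I(G-N[x])$ (that ideal involving none of them), so $\pd(S/(I(G):(x)))=\pd(S/I(G-N[x]))+d$ while $\reg(S/(I(G):(x)))=\reg(S/I(G-N[x]))$; likewise $x$ is a nonzerodivisor on $S/I(G-x)$, giving $\pd(S/(I(G)+(x)))=\pd(S/I(G-x))+1$ and $\reg(S/(I(G)+(x)))=\reg(S/I(G-x))$. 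Substituting these into the two short-exact-sequence bounds—and using that the shift $(-1)$ raises regularity by one but leaves projective dimension unchanged—yields exactly inequalities (1) and (2).

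I expect the only delicate point to be the projective-dimension bookkeeping: one must track the ground ring carefully and confirm that the $|N(x)|$ neighbor variables genuinely form a regular sequence on $S/I(G-N[x])$, which is what produces the summand $|N(x)|$ in (2) rather than a single $+1$. Everything else is a routine comparison of the three modules appearing in the sequence.
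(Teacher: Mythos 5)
Your proof is correct and follows essentially the same route as the source of this lemma: the paper states it without proof, citing \cite{DHS} and \cite{DS}, and your argument---exactness of the sequence, the identifications $I(G)+(x)=I(G-x)+(x)$ and $(I(G):(x))=I(G-N[x])+(y : y\in N(x))$, and the regular-sequence bookkeeping that converts the outer modules into $\pd(S/I(G-x))+1$ and $\pd(S/I(G-N[x]))+|N(x)|$, with the shift $(-1)$ accounting for the $+1$ in regularity---is precisely the standard proof given in those references. Nothing further is needed.
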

The \textit{Stanley-Reisner ideal} of a simplicial complex $\Delta$ is the squarefree monomial ideal generated by the monomials corresponding to non-faces of $\Delta$. The following theorem of Hochster \cite{Hoc} provides a formula for the graded Betti numbers of Stanley-Reisner ideals.
\begin{theorem}[Hochster’s Formula]\cite{Hoc}\label{thm:hoch} 
	Let $I_{\Delta}$ be the Stanley-Reisner ideal of a simplicial complex $\Delta$. If $i\geq 0$ and $u$ is a squarefree monomial, then
	\[b_{i,u}(I_{\Delta})=\dim_\Bbbk \tilde{H}_{\deg u-i-2}(\Delta[u]; \Bbbk) \]
	where $\Delta[u]=\{\sigma\in \Delta: \sigma \subseteq U\}$ and $U$ consists of those vertices that correspond to the variables dividing $u$.
\end{theorem}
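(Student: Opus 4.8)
The plan is to realize each multigraded Betti number as the homology of the Koszul complex in a single multidegree and then recognize that homology as the reduced simplicial homology of the restricted complex $\Delta[u]$. Write $b_{i,u}(M)=\dim_\Bbbk\operatorname{Tor}_i^S(M,\Bbbk)_u$ for the multigraded Betti number in degree $u\in\NN^n$. Since $x_1,\dots,x_n$ is a regular sequence in $S$, the Koszul complex $K_\bullet=K_\bullet(x_1,\dots,x_n;S)$, with $p$-th free module $\bigoplus_{J\subseteq[n],\,|J|=p}S\,e_J$ and differential $\partial(e_J)=\sum_{j\in J}\pm\,x_j\,e_{J\setminus\{j\}}$, is a graded free resolution of $\Bbbk$; here $e_J$ carries multidegree $\sum_{j\in J}\epsilon_j$, with $\epsilon_j=\deg x_j$. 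Hence $\operatorname{Tor}_i^S(M,\Bbbk)=H_i(M\otimes_S K_\bullet)$ as $\ZZ^n$-graded vector spaces, and it suffices to compute the homology of $M\otimes_SK_\bullet$ in each fixed multidegree $u$.

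First I would pass from the ideal to the quotient ring. The short exact sequence $0\to I_\Delta\to S\to S/I_\Delta\to0$ together with $\operatorname{Tor}_i^S(S,\Bbbk)_u=0$ for every $u$ of positive degree (as $S$ is free and $\Bbbk$ sits in degree $0$) gives, via the long exact sequence in $\operatorname{Tor}$, the isomorphism $\operatorname{Tor}_i^S(I_\Delta,\Bbbk)_u\cong\operatorname{Tor}_{i+1}^S(S/I_\Delta,\Bbbk)_u$, i.e.\ $b_{i,u}(I_\Delta)=b_{i+1,u}(S/I_\Delta)$. Thus it is enough to establish the formula $b_{i,u}(S/I_\Delta)=\dim_\Bbbk\tilde H_{\deg u-i-1}(\Delta[u];\Bbbk)$ for squarefree $u$; combining it with the shift replaces the exponent $\deg u-i-1$ by $\deg u-(i+1)-1=\deg u-i-2$ and yields exactly the asserted equality for $I_\Delta$.

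Next I would fix a squarefree $u$ with vertex support $U=\operatorname{supp}(u)$, so that $m:=\deg u=|U|$, and read off the degree-$u$ strand of $(S/I_\Delta)\otimes_SK_\bullet$. Because $(S/I_\Delta)_a=\Bbbk$ precisely when $\operatorname{supp}(a)\in\Delta$ and is $0$ otherwise, the degree-$u$ part of the $p$-th term has a $\Bbbk$-basis indexed by the sets $J\subseteq U$ with $|J|=p$ for which $\sigma:=U\setminus J$ is a face of $\Delta[u]$; such a $\sigma$ has dimension $m-p-1$. Tracking the Koszul differential in this degree, $\partial$ sends the generator attached to $\sigma$ to the signed sum over $j\in U\setminus\sigma$ of the generators attached to the cofaces $\sigma\cup\{j\}$, which is precisely the reduced simplicial coboundary of $\Delta[u]$ under the dimension-reversing reindexing (homological degree $p$)$\leftrightarrow$(face dimension $m-p-1$), the empty face $\sigma=\emptyset$ (at $p=m$) supplying the $(-1)$-dimensional term. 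Therefore the degree-$u$ strand is isomorphic, after this reindexing, to the reduced cochain complex $\tilde C^\bullet(\Delta[u];\Bbbk)$, so $H_i$ of it equals $\tilde H^{\,m-i-1}(\Delta[u];\Bbbk)$.

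Finally, because $\Bbbk$ is a field, reduced cohomology and reduced homology agree dimensionwise, $\dim_\Bbbk\tilde H^{\,k}(\Delta[u];\Bbbk)=\dim_\Bbbk\tilde H_k(\Delta[u];\Bbbk)$, which gives $b_{i,u}(S/I_\Delta)=\dim_\Bbbk\tilde H_{m-i-1}(\Delta[u];\Bbbk)$; the shift from the second paragraph then delivers the stated formula for $I_\Delta$. I expect the only genuinely delicate point to be the sign bookkeeping: verifying that the signs in the Koszul differential match those of the simplicial coboundary under the reversed indexing, and confirming that the empty face is retained so that \emph{reduced} (rather than ordinary) homology appears. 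The remaining steps are a direct transcription of the Koszul computation into simplicial language.
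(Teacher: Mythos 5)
Your proposal is correct, but there is nothing in the paper to compare it against: the paper states Hochster's formula as a black box with a citation to \cite{Hoc} and never proves it, since it is only used once (in Proposition~\ref{prop: pd max value}, to detect nonvanishing of the top Betti number via disconnectedness of the independence complex). What you have written is essentially the standard textbook proof, as in Miller--Sturmfels or Herzog--Hibi: resolve $\Bbbk$ by the Koszul complex so that $b_{i,u}=\dim_\Bbbk H_i\bigl((S/I_\Delta)\otimes_S K_\bullet\bigr)_u$, identify the degree-$u$ strand with the reduced cochain complex of $\Delta[u]$ under the reindexing $p \leftrightarrow \deg u - p - 1$, and use field coefficients to pass from cohomology to homology. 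All the steps check out: the dimension shift $b_{i,u}(I_\Delta)=b_{i+1,u}(S/I_\Delta)$ via the long exact sequence is valid because $\operatorname{Tor}_i^S(S,\Bbbk)_u$ vanishes for $\deg u>0$ in all homological degrees (including $i=0$); the basis of the strand is correctly indexed by $J\subseteq U$ with $U\setminus J\in\Delta$ (terms with $J\not\subseteq U$ die because $u$ is squarefree, and cofaces outside $\Delta[u]$ die in the quotient ring); and the empty face at $p=\deg u$ correctly supplies the $(-1)$-dimensional reduced term. You rightly flag the sign bookkeeping as the only delicate point, and it is genuinely routine: the Koszul signs and the simplicial coboundary signs agree up to a sign change of basis in each homological degree, which does not affect homology. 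So your argument supplies a complete proof where the paper deliberately supplies none, at the cost of a page of Koszul bookkeeping the authors had no need to reproduce.
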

If $G$ is a graph, the \textit{independence complex} of $G$ is a simplicial complex whose faces are independent sets of $G$. The edge ideal $I(G)$ of $G$ is the Stanley-Reisner ideal of the independence complex of $G$.

By a theorem of Terai \cite{T}, $\pd(S/I(G))$ is equal to the regularity of the Alexander dual of $I(G)$. Therefore, the projective dimension problem for edge ideals is equivalent to the regularity problem for so-called cover ideals. The next theorem  can be deduced from \cite[Corollary~3.3]{MV} or \cite[Corollary~8.2.14]{HH} both of which are stated in the more general setting of monomial ideals but in dual terms.
\begin{theorem}\cite{HH, MV}\label{thm: pd lower bound for any graph}
	For any graph $G$, $\pd(S/I(G))\geq \tau_{\max}(G)$. Moreover, the equality holds when $S/I(G)$ is sequentially Cohen-Macaulay. 
\end{theorem}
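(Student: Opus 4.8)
The plan is to route everything through Terai's duality, which is already recorded above: $\pd(S/I(G)) = \reg(I(G)^\vee)$, where $I(G)^\vee$ denotes the Alexander dual of the edge ideal. First I would identify $I(G)^\vee$ concretely as the \emph{cover ideal} $J(G)$, whose minimal monomial generators are exactly the squarefree monomials $\prod_{x\in C} x$ as $C$ runs over the minimal vertex covers of $G$. This identification is the combinatorial heart of the translation: the facets of the independence complex of $G$ are the maximal independent sets, and their complements are precisely the minimal vertex covers, so the generators of the Stanley--Reisner ideal of the Alexander dual complex are indexed by minimal vertex covers. Consequently the maximal degree of a minimal generator of $J(G)$ equals $\tau_{\max}(G)$.

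For the inequality I would use only the elementary fact that for any graded ideal $I$ one has $\reg(I)\geq d$ whenever $I$ has a minimal generator of degree $d$, since $b_{0,d}(I)\neq 0$ forces $\reg(I)\geq d-0$. Applying this with $I=J(G)$ and $d=\tau_{\max}(G)$ gives $\reg(J(G))\geq \tau_{\max}(G)$, and combining with Terai's equality \cite{T} yields $\pd(S/I(G))=\reg(J(G))\geq \tau_{\max}(G)$, as desired.

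For the equality under the sequentially Cohen--Macaulay hypothesis, the key input is the Herzog--Hibi characterization: $S/I(G)$ is sequentially Cohen--Macaulay if and only if its Alexander dual $J(G)$ is componentwise linear. Granting this, I would invoke the standard property that a componentwise linear ideal attains regularity equal to the top degree of its minimal generators, so that $\reg(J(G))$ equals the maximal generator degree, which we already computed to be $\tau_{\max}(G)$. Thus $\pd(S/I(G))=\reg(J(G))=\tau_{\max}(G)$ in this case. The references \cite[Corollary~8.2.14]{HH} and \cite[Corollary~3.3]{MV} package exactly these two pieces---the generator-degree bound and its sharpening under componentwise linearity---in the dual language of monomial ideals, so the entire argument amounts to transporting their conclusions through Terai's theorem.

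The main obstacle, such as it is, lies not in any delicate estimate but in getting the Alexander-dual dictionary exactly right. One must be careful that the relevant quantity is the \emph{maximum} size of a minimal vertex cover (the big height of $I(G)$, coming from reading off all minimal generators of $J(G)$) rather than the ordinary vertex cover number which would only control $\height I(G)$; and one must note that the sharpening to an equality genuinely requires the componentwise linearity afforded by sequential Cohen--Macaulayness, since without it $\reg(J(G))$ can strictly exceed the top generator degree, so the lower bound need not be attained.
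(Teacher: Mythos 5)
Your proposal is correct and is exactly the deduction the paper has in mind: the paper gives no proof of its own, remarking only that the theorem ``can be deduced from \cite[Corollary~3.3]{MV} or \cite[Corollary~8.2.14]{HH} both of which are stated in the more general setting of monomial ideals but in dual terms,'' and your argument---Terai's duality $\pd(S/I(G))=\reg(I(G)^\vee)$, the identification of $I(G)^\vee$ with the cover ideal whose maximal generator degree is $\tau_{\max}(G)$, and the sharpening via the Herzog--Hibi equivalence of sequential Cohen--Macaulayness with componentwise linearity of the dual---is precisely that transport. Your closing caution about $\tau_{\max}(G)$ (big height) versus the vertex cover number (height) is also the right point of care in this dictionary.
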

Since forests are known to be sequentially Cohen-Macaulay (see \cite{F} or \cite{FVT}), we have an exact formula for $\pd(S/I(G))$ when $G$ is a forest. This formula was proved independently by several authors in the literature.

\begin{theorem}\cite{K,Z}\label{thm: forest pd}
	If $G$ is a forest, then $\pd(S/I(G))=\tau_{\max}(G)$.
\end{theorem}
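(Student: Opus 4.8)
The plan is to sandwich $\pd(S/I(G))$ between $\tau_{\max}(G)$ and $\tau_{\max}(G)$. The lower bound is free: Theorem~\ref{thm: pd lower bound for any graph} gives $\pd(S/I(G))\ge \tau_{\max}(G)$ for \emph{every} graph, so the entire content is the reverse inequality $\pd(S/I(G))\le \tau_{\max}(G)$ for a forest. The quickest way to obtain it is to invoke the structural fact recalled just before the statement: every forest is sequentially Cohen--Macaulay (\cite{F, FVT}), so the ``moreover'' clause of Theorem~\ref{thm: pd lower bound for any graph} upgrades the lower bound to an equality, and there is nothing more to do. In this reading the statement is a one-line corollary, and all the genuine work is external, residing in the sequential Cohen--Macaulayness of forests or in the independent arguments of \cite{K, Z}.

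For a self-contained proof of the upper bound that does not black-box sequential Cohen--Macaulayness, I would induct on $|V(G)|$ using Lemma~\ref{lem: key lemma}(2), discarding isolated vertices (which affect neither side) and treating a single vertex or single edge as base cases. The subtlety to flag immediately is that one must \emph{not} delete an arbitrary leaf: for $G=P_4$ with leaf $x$ and neighbour $y$ one has $\tau_{\max}(G-x)=\tau_{\max}(G)=2$, so the branch $\pd(S/I(G-x))+1$ equals $3$ and overshoots the true value $2$. The remedy is to split at a vertex $y$ that is \emph{adjacent to a leaf}, which always exists once $G$ has an edge. Since $G-y$ and $G-N[y]$ are again forests on fewer vertices, Lemma~\ref{lem: key lemma}(2) together with the inductive hypothesis yields
\[
\pd(S/I(G)) \le \max\{\tau_{\max}(G-y)+1,\ \tau_{\max}(G-N[y])+|N(y)|\}.
\]

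It then remains to verify the two combinatorial inequalities $\tau_{\max}(G-y)+1\le \tau_{\max}(G)$ and $\tau_{\max}(G-N[y])+|N(y)|\le \tau_{\max}(G)$, which close the induction against the lower bound from Theorem~\ref{thm: pd lower bound for any graph}; this bookkeeping is the main obstacle. Both are proved by promoting a maximum minimal vertex cover of the smaller graph to a minimal vertex cover of $G$ of the prescribed size. For the first, one takes a maximum minimal cover $C'$ of $G-y$ and forms $C'\cup\{y\}$: because $y$ is adjacent to a leaf $x$, the vertex $x$ is isolated in $G-y$ and hence lies outside $C'$, so the edge $\{x,y\}$ is a private edge for $y$ and minimality survives. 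For the second, one takes a maximum minimal cover $C''$ of $G-N[y]$ and forms $C''\cup N(y)$: each $v\in N(y)$ keeps the edge $\{v,y\}$ as a private edge, while the private edges of $C''$ remain private since their endpoints lie outside $N[y]$. The delicate point throughout is precisely the preservation of \emph{minimality} under reinsertion and the correct exclusion of newly isolated vertices; once the right splitting vertex is chosen, these verifications are routine, so the essential insight is the choice of a leaf-adjacent vertex rather than the leaf itself.
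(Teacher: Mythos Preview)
Your first paragraph is exactly how the paper handles this: the theorem is stated without proof, cited to \cite{K,Z}, and the sentence immediately preceding it observes that forests are sequentially Cohen--Macaulay \cite{F,FVT}, so the ``moreover'' clause of Theorem~\ref{thm: pd lower bound for any graph} already gives the equality. There is no further argument in the paper.

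Your self-contained inductive proof via Lemma~\ref{lem: key lemma}(2), splitting at a vertex $y$ adjacent to a leaf, is a correct and genuinely different route that bypasses the sequential Cohen--Macaulay black box. The two promotion arguments---that $C'\cup\{y\}$ and $C''\cup N(y)$ are minimal vertex covers of $G$ of the right sizes---go through exactly as you describe, and your $P_4$ example correctly pinpoints why splitting at the leaf itself would fail. The trade-off is clear: the paper's deduction is a single line but imports substantial external machinery (shellability/sequential Cohen--Macaulayness of independence complexes of forests), while your induction is elementary and stays entirely within the combinatorics of minimal vertex covers, at the cost of the bookkeeping you flagged.
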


The following lower bound was also proved several times in the literature:

\begin{theorem}\cite{HVT, Kat, K, Z} \label{thm: reg lower bound for any graph}
	For any graph $G$, $\reg(S/I(G))\geq \indm(G)$.
\end{theorem}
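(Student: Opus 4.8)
The plan is to bound $\reg(S/I(G))$ from below by exhibiting a single nonzero graded Betti number of $I(G)$ supported on the vertices of a maximum induced matching, via Hochster's formula (Theorem~\ref{thm:hoch}). Observe at the outset that the short exact sequence estimates of Lemma~\ref{lem: key lemma} only produce \emph{upper} bounds on $\reg(S/I(G))$, so a lower bound such as this one cannot be obtained by that kind of induction; some genuine homological input is needed, and Hochster's formula is the natural source.

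First I would fix a maximum induced matching $M=\{e_1,\dots,e_k\}$ with $k=\indm(G)$ and set $W=e_1\cup\cdots\cup e_k$, so that $|W|=2k$. By the very definition of an induced matching, the induced subgraph $G[W]$ has exactly the edges $e_1,\dots,e_k$; since every vertex of $W$ is an endpoint of some $e_i$, the graph $G[W]$ is a disjoint union of $k$ edges. Let $\Delta=\mathrm{Ind}(G)$ be the independence complex, so that $I_\Delta=I(G)$, and consider the squarefree monomial $u=\prod_{v\in W}x_v$, for which $\Delta[u]=\mathrm{Ind}(G[W])$. The crucial step is to identify the homotopy type of this subcomplex: the independence complex of a disjoint union of graphs is the topological join of the independence complexes of the parts, and the independence complex of a single edge is the two-point complex $S^0$. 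Hence $\mathrm{Ind}(G[W])$ is the $k$-fold join $S^0 * \cdots * S^0$, which is homeomorphic to the sphere $S^{k-1}$, so that $\tilde H_{k-1}(\Delta[u];\Bbbk)\cong\Bbbk\neq 0$.

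Feeding this into Hochster's formula with $\deg u=2k$ and $\deg u-i-2=k-1$, that is $i=k-1$, gives $b_{k-1,u}(I(G))=\dim_\Bbbk\tilde H_{k-1}(\Delta[u];\Bbbk)\neq 0$. This nonzero Betti number witnesses $\reg(I(G))\geq \deg u-i=2k-(k-1)=k+1$, and therefore $\reg(S/I(G))=\reg(I(G))-1\geq k=\indm(G)$, which is the claim. Equivalently, one may bypass the sphere computation by invoking Theorem~\ref{thm:regularity formula connected components}: a single edge has regularity one, so $\reg(S/I(G[W]))=k$, and it then suffices to note that regularity cannot decrease on passing from an induced subgraph to the ambient graph. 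That monotonicity is itself a one-line consequence of Hochster's formula, since $(G[W])[T]=G[T]$ for every $T\subseteq W$, so every induced subcomplex contributing a nonzero Betti number to $I(G[W])$ reappears among those of $I(G)$ (adjoining the unused variables does not affect regularity).

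I expect the only real obstacle to be the topological computation, namely that the independence complex of $k$ disjoint edges is a $(k-1)$-sphere --- equivalently, the induced-subgraph monotonicity of regularity. Once that is in place, the rest is pure bookkeeping with Hochster's formula, relying only on the fact that a single edge has regularity one.
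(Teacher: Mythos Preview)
Your argument is correct, but there is nothing to compare it against: in the paper this theorem is quoted from the literature with the citation \cite{HVT, Kat, K, Z} and no proof is supplied. What you have written is essentially the standard proof (as in Katzman \cite{Kat}): restrict to the vertex set $W$ of a maximum induced matching, note that $\mathrm{Ind}(G[W])$ is the boundary of the cross-polytope (equivalently, the $k$-fold join of $S^0$, hence an $S^{k-1}$), and read off the nonvanishing Betti number $b_{k-1,2k}(I(G))$ from Hochster's formula. Your alternative via Theorem~\ref{thm:regularity formula connected components} together with the observation that regularity is monotone under passing to induced subgraphs is equally valid and is how the bound is often phrased in survey treatments. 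No gaps.
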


When $G$ is a forest, the regularity can be described by $\indm(G)$. 
\begin{theorem}\cite{HVT, K, Z}\label{thm: forest reg}
	If $G$ is a forest, then $\reg(S/I(G))=\indm(G)$.
\end{theorem}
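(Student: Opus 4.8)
Since Theorem~\ref{thm: reg lower bound for any graph} already gives $\reg(S/I(G)) \geq \indm(G)$ for every graph $G$, the only thing that needs proof is the reverse inequality $\reg(S/I(G)) \leq \indm(G)$ for forests $G$. The plan is to induct on the number of vertices of $G$. When $G$ has no edges we have $I(G)=(0)$, so $\reg(S/I(G))=0=\indm(G)$, which settles the base case; thus I may assume $G$ has at least one edge. Every forest with an edge has a leaf, so I fix a leaf $x$ together with its unique neighbour $y$. The idea is to apply the regularity bound of Lemma~\ref{lem: key lemma}(1) \emph{at the vertex $y$} (rather than at the leaf itself), which yields
\[\reg(S/I(G)) \leq \max\{\reg(S/I(G-y)),\ \reg(S/I(G-N[y]))+1\}.\]

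Both $G-y$ and $G-N[y]$ are induced subgraphs of $G$, hence forests on fewer vertices, so the induction hypothesis identifies their regularity with their induced matching number. For the first term I would use that the induced matching number is monotone under passing to induced subgraphs, giving $\reg(S/I(G-y))=\indm(G-y)\leq \indm(G)$. For the second term the goal is the strict drop $\indm(G-N[y])\leq \indm(G)-1$. Here is the combinatorial heart of the argument: if $M$ is a maximum induced matching of $G-N[y]$, then none of its vertices lies in $N[y]$, so in particular $y$ has no neighbour among the vertices covered by $M$; and since $x$ is a leaf, its only neighbour is $y$, which is likewise absent from $M$. Consequently $M\cup\{\{x,y\}\}$ is again an induced matching, now of $G$, with one more edge than $M$. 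This gives $\indm(G)\geq \indm(G-N[y])+1$, hence $\reg(S/I(G-N[y]))+1=\indm(G-N[y])+1\leq \indm(G)$. Feeding both estimates into the displayed bound yields $\reg(S/I(G))\leq \indm(G)$, completing the induction.

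I expect the main obstacle to be locating the right vertex at which to split. Applying Lemma~\ref{lem: key lemma}(1) at the leaf $x$ directly does not work, because the neighbour $y$ may well be adjacent to vertices of an induced matching of $G-N[x]=G-\{x,y\}$, so the edge $\{x,y\}$ need not extend it and the required drop in $\indm$ can fail. Splitting at $y$ instead is exactly what forces every neighbour of $y$ out of $G-N[y]$, making the extension step clean; checking that this same choice keeps the $G-y$ term under control (via monotonicity) is what makes the two bounds fit together under the maximum.
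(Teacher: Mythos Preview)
The paper does not supply its own proof of this theorem; it is quoted as a known result from \cite{HVT, K, Z} and used as a black box. Your argument is correct and is essentially the standard inductive proof appearing in those references: pick a leaf $x$ with neighbour $y$, split at $y$ via Lemma~\ref{lem: key lemma}(1), and use the key combinatorial fact that any induced matching of $G-N[y]$ extends by the edge $\{x,y\}$. Your closing remark that splitting at the leaf $x$ itself can fail is accurate (the path $x\text{--}y\text{--}a\text{--}b$ already shows $\indm(G-N[x])=\indm(G)=1$), and isolating $y$ as the correct pivot is exactly the point of the original proofs.
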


 Hà and Van Tuyl \cite{HVT} gave an upper bound for the regularity of edge ideal of any graph via the matching number of the graph:

\begin{theorem}\cite{HVT}\label{thm: matching number upper bound}
	For any graph $G$, $\reg(S/I(G))\leq \mat(G)$.
\end{theorem}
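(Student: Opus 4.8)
The plan is to prove the bound by induction on the number of vertices of $G$, with the short exact sequence of Lemma~\ref{lem: key lemma} supplying the inductive step. For the base case, if $G$ has no edges then $I(G)=0$, so $\reg(S/I(G))=0=\mat(G)$ and there is nothing to prove. In the inductive step I would assume the inequality $\reg(S/I(H))\leq\mat(H)$ for every graph $H$ on fewer vertices than $G$, and assume $G$ has at least one edge; this lets me fix a non-isolated vertex $x$.

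Applying Lemma~\ref{lem: key lemma}(1) to $x$ gives
\[\reg(S/I(G))\leq\max\{\reg(S/I(G-x)),\ \reg(S/I(G-N[x]))+1\},\]
and since both $G-x$ and $G-N[x]$ have strictly fewer vertices than $G$, the induction hypothesis applies to each. It therefore suffices to bound the two quantities on the right-hand side by $\mat(G)$. The first is immediate: deleting a vertex cannot increase the matching number, since any matching of $G-x$ is already a matching of $G$, so $\reg(S/I(G-x))\leq\mat(G-x)\leq\mat(G)$.

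The step carrying the real content is the second term, where I must establish that $\mat(G-N[x])\leq\mat(G)-1$; the whole induction hinges on gaining this single unit. Here I would use that $x$ is non-isolated: fixing a neighbor $y$ of $x$ and taking a maximum matching $M$ of $G-N[x]$, no edge of $M$ can meet $x$ or any vertex of $N(x)$, because those vertices have been deleted in passing to $G-N[x]$. Hence $M\cup\{\{x,y\}\}$ is a matching of $G$ of size $|M|+1$, giving $\mat(G)\geq\mat(G-N[x])+1$. Combined with the induction hypothesis this yields $\reg(S/I(G-N[x]))+1\leq\mat(G-N[x])+1\leq\mat(G)$, and substituting both bounds into the displayed inequality closes the induction. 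The only nontrivial ingredient is the elementary matching inequality of this paragraph; everything else is a direct transcription of Lemma~\ref{lem: key lemma}(1) and the monotonicity of $\mat$ under vertex deletion.
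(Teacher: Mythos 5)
Your proof is correct. Note that the paper itself does not prove this theorem: it is quoted from H\`a and Van Tuyl \cite{HVT} as a known result, so there is no internal argument to compare against. The original proof in \cite{HVT} takes a genuinely different route: it is phrased for hypergraph edge ideals and proceeds by induction on the edges, using splitting-edge techniques to control the graded Betti numbers, with the graph statement $\reg(I(G))\leq \mat(G)+1$ (equivalent to the inequality for $S/I(G)$) falling out as a special case of a more general bound. Your argument is instead the vertex-deletion induction through Lemma~\ref{lem: key lemma}(1), in the style of Dao--Huneke--Schweig, and it is complete: the monotonicity $\mat(G-x)\leq\mat(G)$ handles the first term, and your key inequality $\mat(G-N[x])\leq\mat(G)-1$ is proved correctly, since a maximum matching of $G-N[x]$ avoids all of $N[x]$ and can be enlarged by the edge $\{x,y\}$ for any neighbor $y$ of the (non-isolated) vertex $x$. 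The only point worth a sentence is the degenerate case in which $G-N[x]$ has no vertices, where $\reg(S/I(G-N[x]))=0$ and the bound $\reg(S/I(G-N[x]))+1\leq\mat(G)$ holds trivially because $\mat(G)\geq 1$; your base case and conventions absorb this. What your approach buys is self-containedness within the toolkit this paper already assembles (Lemma~\ref{lem: key lemma} is the same device the paper uses repeatedly, e.g.\ in Theorem~\ref{thm: main theorem odd case} and Theorem~\ref{thm: bipartite construction}), whereas the approach of \cite{HVT} buys generality, yielding bounds for hypergraph edge ideals rather than graphs alone.
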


Woodroofe \cite{W} improved the upper bound in the previous theorem by replacing the matching number with the co-chordal cover number:
\begin{theorem}\cite{W}\label{thm: co-chord number upper bound}
	For any graph $G$, $\reg(S/I(G))\leq \coc(G)$.
\end{theorem}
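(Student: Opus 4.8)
The plan is to translate the inequality into a purely topological statement about the independence complex $\operatorname{Ind}(G)$ (whose faces are the independent sets of $G$, and whose Stanley--Reisner ideal is exactly $I(G)$) and then exploit the fact that a co-chordal cover of $G$ realizes $\operatorname{Ind}(G)$ as an intersection of simpler complexes. Recall that a simplicial complex $\Delta$ on vertex set $V$ is \emph{$d$-Leray} (over $\Bbbk$) if $\tilde{H}_i(\Delta[W];\Bbbk)=0$ for every $W\subseteq V$ and every $i\ge d$, where $\Delta[W]$ is the induced subcomplex on $W$. The first step is to read off, directly from Hochster's formula (Theorem~\ref{thm:hoch}), the identity
\[
\reg(S/I(G))=\min\{d:\operatorname{Ind}(G)\text{ is }d\text{-Leray}\}.
\]
Indeed, specializing Theorem~\ref{thm:hoch} to $I(G)$ and a squarefree monomial $u=\prod_{x\in W}x$ gives $\Delta[u]=\operatorname{Ind}(G[W])$ and $b_{i,u}(I(G))=\dim_\Bbbk\tilde{H}_{|W|-i-2}(\operatorname{Ind}(G[W]);\Bbbk)$. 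Writing $\ell=|W|-i-2$ for the topological degree, a nonzero Betti number contributes $j-i=|W|-i=\ell+2$ to the regularity, so $\reg(I(G))=\reg(S/I(G))+1$ equals $2$ plus the top degree $\ell$ in which some induced subcomplex has nonzero reduced homology; equivalently, that homology vanishes in all degrees $\ge\reg(S/I(G))$, which is precisely the displayed Leray characterization.

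Second, I would isolate the building block $\coc=1$. If $H$ is a co-chordal subgraph with at least one edge, then by Fröberg's theorem $I(H)$ has a linear resolution, so $\reg(S/I(H))=1$, and hence by the identity above $\operatorname{Ind}(H)$ is $1$-Leray; an edgeless graph has a simplex (contractible) as its independence complex and is $0$-Leray. Thus every co-chordal graph has a $1$-Leray independence complex. After possibly adding isolated vertices to each piece --- which in the complement amounts to adjoining a universal vertex to the chordal graph $H_i^c$ and hence preserves chordality --- I may assume a minimal cover $E(G)=\bigcup_{i=1}^{r}E(H_i)$ with $r=\coc(G)$ in which every $H_i$ has vertex set $V(G)$. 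The combinatorial heart of the reduction is then the identity
\[
\operatorname{Ind}(H_1\cup\cdots\cup H_r)=\operatorname{Ind}(H_1)\cap\cdots\cap\operatorname{Ind}(H_r),
\]
valid on the common vertex set $V(G)$, since a set of vertices avoids every edge of $\bigcup_i H_i$ exactly when it avoids every edge of each $H_i$. As $\bigcup_i H_i=G$, this exhibits $\operatorname{Ind}(G)$ as an intersection of $r$ complexes, each $1$-Leray.

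Third, I would invoke the topological input that controls the Leray number under intersection: the theorem of Kalai and Meshulam that $\Delta_1\cap\Delta_2$ is $(d_1+d_2)$-Leray whenever $\Delta_1$ is $d_1$-Leray and $\Delta_2$ is $d_2$-Leray. Iterating over the $r$ pieces shows that $\operatorname{Ind}(G)=\bigcap_{i=1}^{r}\operatorname{Ind}(H_i)$ is $r$-Leray, and combining this with the first step yields $\reg(S/I(G))\le r=\coc(G)$, as desired.

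The step I expect to be the main obstacle is the Kalai--Meshulam subadditivity of Leray numbers under intersection; this is the one genuinely nontrivial ingredient. It is proved by a Mayer--Vietoris / spectral-sequence analysis relating the reduced homology of $\Delta_1\cap\Delta_2$ (and of all its induced subcomplexes) to that of $\Delta_1$, $\Delta_2$, and $\Delta_1\cup\Delta_2$, the vanishing ranges adding exactly as the Leray numbers do. By contrast, the Hochster translation and the intersection identity for independence complexes are formal, and Fröberg's theorem can be taken as known. I would therefore cite Kalai--Meshulam directly; if a self-contained treatment were wanted, I would instead prove the intersection bound for $1$-Leray complexes by induction on the number of vertices, using the Mayer--Vietoris sequences of the $\operatorname{Ind}(H_i)$ together with the deletion--link decompositions that mirror the short exact sequence of Lemma~\ref{lem: key lemma}.
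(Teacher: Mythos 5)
Your proposal is correct, and it is essentially the proof of this theorem: the paper itself does not prove the statement but cites Woodroofe \cite{W}, whose argument is exactly the one you reconstruct --- translate $\reg(S/I(G))$ into the Leray number of the independence complex via Hochster's formula, use Fr\"oberg's theorem to see that each co-chordal piece (with isolated vertices added so all pieces share the vertex set $V(G)$, which preserves co-chordality) has $1$-Leray independence complex, and apply the Kalai--Meshulam subadditivity of Leray numbers under intersection. Your handling of the edge cases (edgeless pieces, spanning the cover) and the identification $\reg(S/I(G))=\min\{d:\operatorname{Ind}(G)\text{ is }d\text{-Leray}\}$ are both accurate, so there is nothing to correct.
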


\section{Edge Ideals of Bipartite Graphs}	
In the following lemma, we provide a rough estimate for the possible values of regularity and projective dimension of edge ideals of bipartite graphs.	
	
	\begin{lemma}\label{lem: coarse bound}
		Let $G$ be bipartite graph on $n\geq 2$ vertices which has no isolated vertices. Then
		\begin{enumerate}
			\item $\lceil n/2 \rceil \leq \pd(S/I(G)) \leq n-1$,
			\item $1\leq \reg(S/I(G)) \leq \lfloor n/2 \rfloor$.
		\end{enumerate}
	\end{lemma}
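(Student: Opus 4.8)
The plan is to prove the two statements separately, deriving each of the four inequalities from one of the quoted bounds together with a short combinatorial observation about the bipartition $V(G)=A\cup B$.

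For part (2), the lower bound comes for free: since $n\geq 2$ and $G$ has no isolated vertices, $G$ has at least one edge, so $\indm(G)\geq 1$, and Theorem~\ref{thm: reg lower bound for any graph} gives $\reg(S/I(G))\geq\indm(G)\geq 1$. For the upper bound I would apply Theorem~\ref{thm: matching number upper bound} to get $\reg(S/I(G))\leq\mat(G)$, and then observe that a matching of size $m$ consists of $m$ pairwise disjoint edges, hence occupies $2m$ distinct vertices; thus $2\mat(G)\leq n$ and $\mat(G)\leq\lfloor n/2\rfloor$. (This half does not use bipartiteness.)

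For the projective dimension, I would handle the upper bound via the Auslander--Buchsbaum formula $\pd(S/I(G))=n-\operatorname{depth}(S/I(G))$, so that $\pd(S/I(G))\leq n-1$ is equivalent to $\operatorname{depth}(S/I(G))\geq 1$, i.e.\ $\mathfrak{m}=(x_1,\dots,x_n)\notin\operatorname{Ass}(S/I(G))$. Since $I(G)$ is squarefree, its associated primes coincide with its minimal primes, which are exactly the primes generated by the minimal vertex covers of $G$. The cover $V(G)$ is not minimal, because for any vertex $v$ every edge still has an endpoint in $V(G)\setminus\{v\}$; hence $\mathfrak{m}$ is not associated and the depth is positive.

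The one step requiring genuine care is the lower bound $\pd(S/I(G))\geq\lceil n/2\rceil$, which I would obtain from Theorem~\ref{thm: pd lower bound for any graph} by showing $\tau_{\max}(G)\geq\lceil n/2\rceil$. Order the bipartition so that $|A|\leq|B|$, whence $|B|\geq\lceil n/2\rceil$. Bipartiteness forces every edge to meet $B$, so $B$ is a vertex cover, and I claim it is minimal: for each $b\in B$, the hypothesis that $b$ is not isolated supplies a neighbor $a\in A$, and the edge $\{a,b\}$ is not covered by $B\setminus\{b\}$ since $a\notin B$ and $b\notin B\setminus\{b\}$. Therefore $B$ is a minimal vertex cover with $|B|\geq\lceil n/2\rceil$, giving $\tau_{\max}(G)\geq\lceil n/2\rceil$ and hence $\pd(S/I(G))\geq\lceil n/2\rceil$. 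I expect this to be the only delicate point: it is exactly here that both hypotheses enter, bipartiteness to make the larger side a cover and the absence of isolated vertices to make it a \emph{minimal} one; every other inequality is an immediate consequence of a cited theorem.
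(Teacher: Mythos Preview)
Your proof is correct and follows essentially the same route as the paper: the substantive step---exhibiting the larger side of the bipartition as a minimal vertex cover to invoke Theorem~\ref{thm: pd lower bound for any graph} for $\pd(S/I(G))\geq\lceil n/2\rceil$---is identical. The paper dispatches the three easy inequalities more tersely (deducing both $\reg\geq 1$ and $\pd\leq n-1$ directly from the fact that $I(G)$ is squarefree and generated in degree two, and bounding $\mat(G)$ by $\min(|A|,|B|)$), but your more explicit arguments via $\indm(G)\geq 1$ and Auslander--Buchsbaum are equally valid.
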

\begin{proof}
	Since $I(G)$ is generated in degree two, it is clear that $\pd(S/I(G))\leq n-1$ and $\reg(S/I(G))\geq 1$. Let $V(G)=A\cup B$ be a bipartition of the vertex set of $G$. Then either $A$ or $B$ has cardinality at least $\lceil n/2 \rceil$. Since $G$ has no isolated vertices, both $A$ and $B$ are minimal vertex covers. Hence $\tau_{\max}(G)\geq \lceil n/2 \rceil$ and $\pd(S/I(G))\geq \lceil n/2 \rceil$ follows from Theorem~\ref{thm: pd lower bound for any graph}. Lastly, $\reg(S/I(G)) \leq \lfloor n/2 \rfloor$ follows from Theorem~\ref{thm: matching number upper bound} as $\mat(G)$ cannot exceed the minimum of the cardinalities of $A$ and $B$.
\end{proof}
	
We will determine when the regularity upper bound in Lemma~\ref{lem: coarse bound} can be realized by connected bipartite graphs. It turns out that when $n$ is an even integer greater than two, no connected graph attains the regularity value in the upper bound. On the other hand, when $n$ is odd, we will see that the regularity upper bound is sharp and, in such case, the projective dimension is uniquely determined.

	\begin{theorem}\label{thm:even case}
		Let $G$ be a connected graph on $n\geq 4$ vertices where $n$ is even. If the matching number of $G$ is $n/2$, then $\coc(G)< n/2$.
	\end{theorem}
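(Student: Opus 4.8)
The plan is to prove the statement directly by induction on $k=n/2\ge 2$, using that $\mat(G)=n/2$ is exactly the assertion that $G$ has a perfect matching $M=\{f_1,\dots ,f_k\}$ with $f_i=\{a_i,b_i\}$. The whole argument rests on one structural fact, which I would isolate as a separate lemma: for any edge $\{a,b\}$ of a graph, the subgraph $H$ consisting of all edges incident to $a$ or to $b$ is co-chordal. To see this, write $V(H)=\{a,b\}\cup L$ with $L=(N(a)\cup N(b))\setminus\{a,b\}$; by construction $L$ is an independent set of $H$, since any edge inside $L$ would be incident to neither $a$ nor $b$. Passing to the complement, $L$ becomes a clique of $H^c$, while $a$ and $b$ are non-adjacent in $H^c$ and each is joined to some subset of $L$. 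Any chordless cycle of $H^c$ of length at least four contains at most two vertices of the clique $L$, hence has length exactly four and vertex set $\{a,b,w_1,w_2\}$ with $w_1,w_2\in L$; but such a cycle cannot be induced because $w_1w_2$ is an edge of the clique $L$. Thus $H^c$ has no induced cycle of length $\geq 4$, so $H^c$ is chordal and $H$ is co-chordal. I expect this lemma to be the main obstacle, everything else being bookkeeping: covering the edges at $a$ and at $b$ by two separate stars would cost two subgraphs, whereas this double star costs only one, and that saving is precisely what drives the strict inequality.

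For the base case $n=4$ (so $k=2$) I would argue that every connected graph on four vertices is co-chordal, whence $\coc(G)=1<2$. Indeed the unique non-chordal graph on four vertices is $C_4$, and $G^c=C_4$ would force $G=2K_2$, which is disconnected; so for connected $G$ the complement $G^c$ is chordal.

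For the inductive step ($k\ge 3$) the plan is to delete one matched pair while preserving both connectivity and a perfect matching, and then glue in a single double star. To choose the pair I would form the auxiliary graph $\mathcal G$ on the nodes $f_1,\dots ,f_k$, declaring $f_i$ adjacent to $f_j$ whenever $G$ has an edge between the two pairs; connectivity of $G$ makes $\mathcal G$ connected, so a spanning tree of $\mathcal G$ has a leaf, say $f_k$. Since a spanning-tree leaf is not a cut vertex, $\mathcal G-f_k$ is connected, and translating $\mathcal G$-paths into walks of $G$ (using the matching edge inside each pair) shows that $G':=G-\{a_k,b_k\}$ is connected. Now $G'$ is a connected graph on $2(k-1)\ge 4$ vertices with perfect matching $M\setminus\{f_k\}$, so $\mat(G')=k-1=|V(G')|/2$ and the induction hypothesis gives $\coc(G')\le k-2$.

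Finally I would assemble the cover. Letting $H$ be the co-chordal graph of all edges of $G$ incident to $a_k$ or $b_k$, every edge of $G$ lies either inside $V(G')$ or in $H$, so adjoining $H$ to a minimum co-chordal cover of $G'$ produces a co-chordal cover of $G$ of size at most $(k-2)+1=k-1$. Hence $\coc(G)\le k-1<k=n/2$, completing the induction. The only delicate points are the co-chordality of the double star, handled by the lemma, and the fact that removing a matched leaf pair keeps the graph connected, which is exactly why the reduction must be routed through the auxiliary graph $\mathcal G$ rather than through $G$ directly.
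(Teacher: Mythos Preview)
Your proof is correct. The paper arrives at the same cover by a direct construction rather than by induction: given a perfect matching $\{e_1,\dots,e_{n/2}\}$, it uses connectivity only once, to arrange that some edge of $G$ joins $e_1$ to $e_2$; it then lets $H_1$ be the induced subgraph on $e_1\cup e_2$ (a connected graph on four vertices, hence co-chordal by exactly your base-case argument), and for $2\le i\le n/2-1$ lets $H_i$ be the double star at $e_{i+1}$. This is essentially the cover your induction produces once unrolled, but the paper sidesteps the auxiliary graph $\mathcal G$ and the spanning-tree-leaf device entirely, since it never needs the graph that remains after deleting matched pairs to be connected. What your write-up adds is an explicit proof that the double star is co-chordal; the paper simply asserts this. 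So your route is a bit heavier machinery for the same payoff, though the connectivity-preserving reduction you set up is a nice self-contained argument in its own right.
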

	\begin{proof}
		Let $\{e_1,\dots ,e_{n/2}\}$ be a matching of $G$. Then it is a perfect matching. Since $
		G$ is connected, we may assume that there is an edge $e$ such that $e\cap e_1 \neq\emptyset$ and $e\cap e_2\neq \emptyset$. Let $H_1$ denote the induced subgraph of $G$ on $e_1\cup e_2$. Furthermore, for each $2\leq i \leq n/2-1$ let $H_i$ be the subgraph of $G$ which consists of those edges $f\in E(G)$ with $f\cap e_{i+1}\neq \emptyset$. Then each $H_i$ is co-chordal and $E(G)=E(H_1)\cup \dots \cup E(H_{n/2-1})$. Hence $\coc(G)\leq n/2-1$.
	\end{proof}
	
	\begin{corollary}\label{cor: even regularity}
		If $G$ is a connected graph on $n\geq 4$ vertices where $n$ is even, then $\reg(S/I(G))<n/2$.
	\end{corollary}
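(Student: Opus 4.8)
The plan is to dispatch this as an immediate consequence of the two upper bounds on regularity already at our disposal, splitting the argument according to the size of the matching number. Since every graph on $n$ vertices satisfies $\mat(G)\leq \lfloor n/2\rfloor$, and $\lfloor n/2\rfloor = n/2$ because $n$ is even, there are really only two cases to consider, and the case division is automatically exhaustive.

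First I would treat the case $\mat(G)<n/2$. Because $\mat(G)$ is an integer and, for even $n$, so is $n/2$, this forces $\mat(G)\leq n/2-1$. Applying the Hà--Van Tuyl bound (Theorem~\ref{thm: matching number upper bound}) then gives $\reg(S/I(G))\leq \mat(G)\leq n/2-1<n/2$, which is the desired conclusion in this case.

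The remaining case is $\mat(G)=n/2$, the maximum possible value, meaning $G$ admits a perfect matching. Here I would invoke Theorem~\ref{thm:even case}, whose hypotheses (connected, $n\geq 4$ even, matching number $n/2$) are met exactly, to obtain $\coc(G)<n/2$. Combining this with Woodroofe's bound (Theorem~\ref{thm: co-chord number upper bound}) yields $\reg(S/I(G))\leq \coc(G)<n/2$.

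Since both cases produce $\reg(S/I(G))<n/2$, the corollary follows. The only genuine mathematical content has already been absorbed into Theorem~\ref{thm:even case}; the corollary itself is pure bookkeeping, so I anticipate no real obstacle beyond confirming that the two cases exhaust all possibilities, which they do precisely because $n/2$ is the largest matching number available on $n$ vertices when $n$ is even.
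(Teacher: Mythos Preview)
Your proof is correct and follows essentially the same approach as the paper: both arguments combine the H\`a--Van Tuyl bound $\reg(S/I(G))\le\mat(G)$ with Theorem~\ref{thm:even case} and Woodroofe's bound when $\mat(G)=n/2$. The only cosmetic difference is that the paper phrases the argument as a proof by contradiction rather than an explicit case split.
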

	\begin{proof}
		By Theorem~\ref{thm: matching number upper bound} we have $\reg(S/I(G))\leq \mat(G)\leq n/2$. Assume for a contradiction $\reg(S/I(G))= n/2$. Then $\mat(G)=n/2$. From Theorem~\ref{thm: co-chord number upper bound} and Theorem~\ref{thm:even case} it follows that $\reg(S/I(G))\leq \coc(G)<n/2$, contradiction.
	\end{proof}
We can actually classify all graphs $G$ on even number of vertices for which $\reg(S/I(G))$ is equal to half the number of vertices: 
	\begin{corollary}\label{cor: even disconnected regularity}
		Let $G$ be a graph on $n$ vertices where $n$ is even. Then $\reg(S/I(G))=n/2$ if and only if $G$ consists of $n/2$ disjoint edges.
	\end{corollary}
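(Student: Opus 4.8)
The plan is to prove both directions of the biconditional, relying on Corollary~\ref{cor: even regularity} applied componentwise together with the additivity of regularity over connected components.

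First I would establish the easy direction. Suppose $G$ consists of $n/2$ disjoint edges, say $G_1,\dots,G_{n/2}$, each $G_i$ being a single edge on two vertices. A single edge has $\reg(S/I(G_i))=1$, since $I(G_i)=(x_{2i-1}x_{2i})$ is a complete intersection of one quadric (equivalently, $\indm$ of an edge is $1$ and the edge is a forest, so Theorem~\ref{thm: forest reg} applies). By Theorem~\ref{thm:regularity formula connected components}, $\reg(S/I(G))=\sum_{i=1}^{n/2}\reg(S/I(G_i))=n/2$.

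For the converse, assume $\reg(S/I(G))=n/2$. The key structural idea is that regularity is additive over components while each connected component of even order already has regularity strictly below half its vertex count, so there is no ``slack'' to spare. Concretely, let $G_1,\dots,G_r$ be the connected components of $G$, with $G_i$ having $n_i$ vertices, so $\sum n_i=n$. Any isolated vertex contributes a component with $\reg(S/I(G_i))=0$ and $n_i=1$; the plan is to handle these at the end, or to first observe that isolated vertices can be discarded without changing the regularity while only decreasing the vertex count, which would already force a contradiction unless none are present. For each component the basic bound $\reg(S/I(G_i))\leq \mat(G_i)\leq \lfloor n_i/2\rfloor$ from Theorem~\ref{thm: matching number upper bound} holds, so by Theorem~\ref{thm:regularity formula connected components},
\[
\frac{n}{2}=\reg(S/I(G))=\sum_{i=1}^{r}\reg(S/I(G_i))\leq \sum_{i=1}^{r}\Big\lfloor \frac{n_i}{2}\Big\rfloor \leq \frac{n}{2}.
\]
Equality must therefore hold throughout, which forces each $\lfloor n_i/2\rfloor = n_i/2$ (so every $n_i$ is even) and $\reg(S/I(G_i))=n_i/2$ for every $i$.

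The crux is then to rule out any component with $n_i\geq 4$: such a component is connected on an even number $n_i\geq 4$ of vertices, so Corollary~\ref{cor: even regularity} gives $\reg(S/I(G_i))<n_i/2$, contradicting the equality just derived. Hence every component has $n_i=2$, and a connected graph on two vertices with $\reg(S/I(G_i))=1>0$ must be a single edge rather than two isolated vertices. This forces $r=n/2$ and exhibits $G$ as $n/2$ disjoint edges. The main obstacle to watch is the bookkeeping around components of order $1$ or $2$: I must make sure the argument excludes isolated vertices (for which $n_i$ is odd and $\reg=0$, breaking the chain of equalities) and correctly identifies the order-two components as genuine edges; once the additivity equality is pinned down, Corollary~\ref{cor: even regularity} does all the real work in eliminating larger components.
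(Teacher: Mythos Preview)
Your proof is correct and follows essentially the same approach as the paper: both use the additivity of regularity over connected components (Theorem~\ref{thm:regularity formula connected components}) together with Corollary~\ref{cor: even regularity} to force every component to have exactly two vertices. Your presentation is slightly more streamlined in that the parity of each $n_i$ drops out of the equality $\sum\lfloor n_i/2\rfloor = n/2$, whereas the paper first deduces that $G$ has a perfect matching (hence every $n_i$ is even) and that $G$ is disconnected before running the same contradiction argument.
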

	\begin{proof}
		Let us assume that $n\geq 4$ as the statement is clear otherwise. If $G$ consists of $n/2$ disjoint edges, then $\reg(S/I(G))=n/2$ follows from Theorem~\ref{thm:regularity formula connected components}. To show the converse, let $\reg(S/I(G))=n/2$. Then by Theorem~\ref{thm: matching number upper bound} the matching number of $G$ is $n/2$ and $G$ has a perfect matching. By Corollary~\ref{cor: even regularity} the graph $G$ is disconnected. Then every connected component of $G$ has a perfect matching. Let $G_1,\dots,G_r$ with $r\geq 2$ be the connected components of $G$. Let $|V(G_i)|=2k_i$ for each $i\in [r]$. Assume for a contradiction one of the connected components has at least two edges. We may assume that $k_1\geq 2$. Then by Corollary \ref{cor: even regularity} we have $\reg(S/I(G_1))<k_1$. Moreover, $\reg(S/I(G_i))\leq \mat(G_i)=k_i$ for each $i=2,\dots ,r$. Using Theorem~\ref{thm:regularity formula connected components} we get
		\[\reg(S/I(G))=\sum_{i=1}^{r}\reg(S/I(G_i)) < \sum_{i=1}^{r}k_i =n/2\]
		which is a contradiction.
	\end{proof}

We can now investigate the regularity upper bound in Lemma~\ref{lem: coarse bound} when $n$ is an odd integer. The following theorem classifies all bipartite graphs $G$ on $n$ vertices with $\reg(S/I(G))=(n-1)/2$.
 
	\begin{theorem}\label{thm: main theorem odd case}
		Let $G$ be a bipartite graph on $n$ vertices where $n$ is an odd number. Then $\reg(S/I(G))=\lfloor n/2 \rfloor$ if and only if $\indm(G)=\lfloor n/2 \rfloor$.
	\end{theorem}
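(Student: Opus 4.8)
The statement is an equivalence, and the ``if'' direction is immediate: if $\indm(G)=\lfloor n/2\rfloor$, then combining the lower bound $\reg(S/I(G))\ge \indm(G)$ of Theorem~\ref{thm: reg lower bound for any graph} with the upper bound $\reg(S/I(G))\le \mat(G)\le \lfloor n/2\rfloor$ of Theorem~\ref{thm: matching number upper bound} forces $\reg(S/I(G))=\lfloor n/2\rfloor$. So all of the content lies in the ``only if'' direction, which I would prove by a vertex-deletion dichotomy. Since $\indm(G)\le \reg(S/I(G))=\lfloor n/2\rfloor$ is automatic, I only need to exhibit an induced matching of size $\lfloor n/2\rfloor=(n-1)/2$.

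First observe that for every vertex $x$ the graph $G-x$ has the even number $n-1$ of vertices, so $\reg(S/I(G-x))\le \mat(G-x)\le (n-1)/2$. I would split on whether this bound is attained. In \emph{Case 1}, some $x$ satisfies $\reg(S/I(G-x))=(n-1)/2$. Here the regularity of $G-x$ equals exactly half its number of vertices, so Corollary~\ref{cor: even disconnected regularity} forces $G-x$ to be a disjoint union of $(n-1)/2$ edges. Because $G-x$ is precisely the induced subgraph of $G$ on $V(G)\setminus\{x\}$, these $(n-1)/2$ edges are automatically an \emph{induced} matching of $G$, giving $\indm(G)\ge (n-1)/2$, as required.

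In \emph{Case 2}, $\reg(S/I(G-x))\le (n-1)/2-1$ for every vertex $x$. Fixing $x$ and feeding this strict inequality into the recursion of Lemma~\ref{lem: key lemma}(1), I would conclude that the term $\reg(S/I(G-N[x]))+1$ must carry the value $(n-1)/2$, i.e.\ $\reg(S/I(G-N[x]))\ge (n-3)/2$. Bounding the left-hand side by $\mat(G-N[x])\le \lfloor (n-1-|N(x)|)/2\rfloor$ then yields $|N(x)|\le 2$; since $x$ is arbitrary, $G$ has maximum degree at most $2$, so each component is a path or (as $G$ is bipartite) an even cycle. To rule out cycles I would invoke Corollary~\ref{cor: even regularity}: an even-cycle component on $2k$ vertices contributes at most $k-1$ to the regularity, one less than $\lfloor 2k/2\rfloor$. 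Summing the per-component estimates $\reg(S/I(G_i))\le \lfloor n_i/2\rfloor$ through Theorem~\ref{thm:regularity formula connected components} and using $\sum_i\lfloor n_i/2\rfloor\le \lfloor n/2\rfloor$, the presence of any cycle component would drop the total strictly below $(n-1)/2$, a contradiction. Hence $G$ is a forest, and Theorem~\ref{thm: forest reg} gives $\indm(G)=\reg(S/I(G))=(n-1)/2$ directly.

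The main obstacle is precisely the passage from ``maximal regularity'' to ``maximal induced matching,'' since in general these two invariants can be far apart; what makes it work is the structural rigidity of the even-vertex corollaries. The delicate point is the case split itself: Case 1 is where the even classification (Corollary~\ref{cor: even disconnected regularity}) does the heavy lifting, while Case 2 forces a degree-$\le 2$ graph so that the forest formula (Theorem~\ref{thm: forest reg}) applies. I would also check the degenerate possibilities—an isolated vertex lands in Case 1, since deleting it leaves the regularity unchanged, and the small odd values of $n$ are immediate—but none of these disturb the argument.
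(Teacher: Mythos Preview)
Your argument is correct and takes a genuinely different route from the paper. For the ``only if'' direction the paper first fixes a maximum matching $M=\{e_1,\dots,e_k\}$, isolates the single uncovered vertex $x$, and then performs a targeted case analysis: if $x$ is isolated it invokes Corollary~\ref{cor: even disconnected regularity} on $G-x$; otherwise it applies Lemma~\ref{lem: key lemma} first at a neighbour $y$ of $x$ (forcing $|N(y)|=2$ and that $\{e_1,\dots,e_{k-1}\}$ is induced), and then, if necessary, at $x$ itself to fall back into the ``$G-x$ is $k$ disjoint edges'' situation. Your approach instead applies Lemma~\ref{lem: key lemma} uniformly at \emph{every} vertex: either some deletion already hits the even classification (your Case~1), or the recursion forces $|N(x)|\le 2$ for all $x$, whence $G$ is a disjoint union of paths and even cycles; Corollary~\ref{cor: even regularity} then eliminates the cycles and Theorem~\ref{thm: forest reg} finishes. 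The paper's proof is more hands-on and actually exhibits the induced matching, while yours is cleaner and more structural, trading the explicit construction for the global degree bound; both rely on the same two ingredients (Lemma~\ref{lem: key lemma} and the even-vertex rigidity of Corollary~\ref{cor: even disconnected regularity}/\ref{cor: even regularity}), so the difference is organizational rather than one of strength.
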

	\begin{proof}
		Let $n=2k+1$. If $\indm(G)=k$, then $G$ is a forest and $\reg(S/I(G))=k$ by Theorem~\ref{thm: forest reg}.
		Now, suppose that $\reg(S/I(G))=k$. Since $\mat(G)\geq \reg(S/I(G))$ by Theorem~\ref{thm: matching number upper bound}, there exists a matching $M=\{e_1,\dots ,e_k\}$ of $G$. Let $x$ be the vertex of $G$ which does not belong to any edge in $M$. We may assume that $k\geq 2$ because $\indm(G)=1$ is clear when $k=1$. We consider two cases:
		
	\textit{Case 1:} Suppose that $x$ is an isolated vertex of $G$. Then \[k=\reg(S/I(G))=\reg(S/I(G-x))\] and by Corollary~\ref{cor: even disconnected regularity} it follows that $M$ is an induced matching of $G-x$. Hence $M$ is an induced matching of $G$.
		
	\textit{Case 2:} Suppose that $\{x,y\}$ is an edge of $G$ where $e_k=\{y,z\}$. By Lemma~\ref{lem: key lemma}
		\[\reg(S/I(G))\leq \max\{\reg(S/I(G-y)),\, \reg(S/I(G-N[y]))+1\}.\]
		Hence either $\reg(S/I(G-y))\geq k$ or $\reg(S/I(G-N[y]))\geq k-1$. Observe that $G-y$ is a bipartite graph such that one side of the bipartition has $k-1$ vertices. This implies that the matching number of $G-y$ is at most $k-1$. Therefore, $\reg(S/I(G-y))< k$ follows from Theorem~\ref{thm: matching number upper bound}. Thus we must have $\reg(S/I(G-N[y]))\geq k-1$. Similarly, since $\mat(G-N[y])\leq k-1$, it follows from Theorem~\ref{thm: matching number upper bound} that \[\reg(S/I(G-N[y]))= k-1=\mat(G-N[y]).\] Hence $N(y)=\{x,z\}$. Corollary~\ref{cor: even disconnected regularity} implies that $\{e_1,\dots ,e_{k-1}\}$ is an induced matching of $G-N[y]$ and thus it is induced matching of $G$. If $y$ is the only neighbor of $x$, then $\{e_1,\dots, e_{k-1}, \{x,y\}\}$ is an induced matching of $G$ and nothing is left to show. Suppose that $x$ has at least two neighbors, say $y$ and $u$. Without loss of generality, we may assume that $e_{k-1}=\{u,v\}$. By Lemma~\ref{lem: key lemma}
		\[\reg(S/I(G))\leq \max\{\reg(S/I(G-x)),\, \reg(S/I(G-N[x]))+1\}.\]
		Hence either $\reg(S/I(G-x))\geq k$ or $\reg(S/I(G-N[x]))\geq k-1$. Observe that $G-N[x]$ is a bipartite graph such that one side of the bipartition has at most $k-2$ vertices. This implies that the matching number of $G-N[x]$ is at most $k-2$. Therefore, $\reg(S/I(G-N[x]))< k-1$ by Theorem~\ref{thm: matching number upper bound}. Thus we must have $\reg(S/I(G-x))\geq k$. In fact, $\reg(S/I(G-x))=k$ because the matching number of $G-x$ is equal to $k$. Then Corollary~\ref{cor: even disconnected regularity} implies that $G-x$ consists of $k$ disjoint edges. Then $M$ is an induced matching of $G-x$. Thus $M$ is an induced matching of $G$.
	\end{proof}
	
	\begin{remark}
		In Theorem \ref{thm: main theorem odd case} the bipartite assumption cannot be dropped. Indeed, if $G$ is a cycle graph of length $5$, then $\reg(S/I(G))=2$ but $\indm(G)=1$. 
	\end{remark}
	
	\begin{corollary}\label{cor: odd regularity}
		Let $G$ be a connected bipartite graph on $n=2k+1$ vertices such that $k\geq 1$. Suppose that $\reg(S/I(G))=k$. Then $\pd(S/I(G))=k+1$.
	\end{corollary}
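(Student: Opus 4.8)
The plan is to translate the regularity hypothesis into a statement about the induced matching number, then to use the extremality of that number to pin down $G$ completely, and finally to read off $\pd(S/I(G))$ from the forest formula. Write $n=2k+1$. Since $\reg(S/I(G))=k=\lfloor n/2\rfloor$, Theorem~\ref{thm: main theorem odd case} applies and gives $\indm(G)=k$. Because $G$ is connected with $n\geq 3$ vertices it has no isolated vertices, so Lemma~\ref{lem: coarse bound}(1) already yields $\pd(S/I(G))\geq\lceil n/2\rceil=k+1$. Thus the whole problem reduces to establishing the reverse inequality $\pd(S/I(G))\leq k+1$, which I intend to do by determining the isomorphism type of $G$.

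The heart of the argument, and the step I expect to be the main obstacle, is to recover the structure of $G$ from the equality $\indm(G)=k$. Fix an induced matching $M=\{e_1,\dots,e_k\}$ with $e_i=\{a_i,b_i\}$; it covers $2k$ of the $2k+1$ vertices, leaving exactly one vertex $x$. Since $M$ is induced, the induced subgraph on $\bigcup_i e_i$ has only the $k$ edges of $M$, so every edge of $G$ outside $M$ must have an endpoint outside $\bigcup_i e_i$, that is, the endpoint $x$. Now I would invoke bipartiteness: if $x$ were joined to both $a_i$ and $b_i$, then $\{x,a_i,b_i\}$ would be a triangle (an odd cycle), which is impossible, so $x$ meets each $e_i$ in at most one vertex. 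Connectivity supplies the opposite inequality, because the components of the graph $(V(G),M)$ are precisely the edges $e_1,\dots,e_k$ together with the isolated vertex $x$, and the only available edges for linking them are those incident to $x$; hence $x$ is adjacent to at least one endpoint of each $e_i$. Consequently $x$ meets each $e_i$ in exactly one vertex, $G$ has exactly $2k=n-1$ edges, and $G$ is the tree obtained by attaching $x$ to one endpoint of each of $k$ pairwise disjoint edges. In particular $G$ is a forest.

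Finally I would conclude through the forest formula. Relabel so that $x$ is adjacent to $a_i$ and $b_i$ is the pendant neighbor of $a_i$. A minimal vertex cover that omits $x$ must contain $a_i$ for every $i$ in order to cover $\{x,a_i\}$, and then it already covers all edges, so it equals $\{a_1,\dots,a_k\}$ and has size $k$. A minimal vertex cover that contains $x$ must, to cover each pendant edge $\{a_i,b_i\}$, contain at least one of $a_i,b_i$, and by minimality not both, hence exactly one; together with $x$ this has size $k+1$, and the value $k+1$ is attained, for instance by $\{x,b_1,\dots,b_k\}$. Therefore every minimal vertex cover has size $k$ or $k+1$, so $\tau_{\max}(G)=k+1$. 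Since $G$ is a forest, Theorem~\ref{thm: forest pd} gives $\pd(S/I(G))=\tau_{\max}(G)=k+1$, which matches the lower bound and completes the proof. Once the structural description of $G$ in the second paragraph is secured, this last computation is routine, so the combinatorial structure step is where the real work lies.
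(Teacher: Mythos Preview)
Your proof is correct and follows the same line as the paper's: use Theorem~\ref{thm: main theorem odd case} to get $\indm(G)=k$, deduce from the induced matching and connectivity/bipartiteness that $G$ is the tree where $x$ is joined to exactly one endpoint of each $e_i$, and then read off $\pd(S/I(G))=\tau_{\max}(G)=k+1$ via Theorem~\ref{thm: forest pd}. The paper asserts these structural facts without spelling out the triangle and connectivity arguments or the $\tau_{\max}$ computation, so your version simply fills in the details (the separate lower bound via Lemma~\ref{lem: coarse bound} is then redundant but harmless).
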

	\begin{proof}
		By Theorem \ref{thm: main theorem odd case} the induced matching number of $G$ is $k$. Let $M=\{e_1,\dots ,e_k\}$ be an induced matching of $G$. Let $x$ be the vertex of $G$ that does not belong to any edge in $M$. Then since $G$  is connected, for every $i\in [k]$, the vertex $x$ is adjacent to exactly one endpoint of $e_i$. Hence $G$ is a tree with $\tau_{\max}(G)=k+1$ and the proof is complete because of Theorem~\ref{thm: forest pd}.
	\end{proof}
The next result describes all connected bipartite graphs for which the projective dimension upper bound in Lemma~\ref{lem: coarse bound} can be realized.
\begin{proposition}\label{prop: pd max value}
	Let $G$ be a connected bipartite graph on $n\geq 2$ vertices. Then $\pd(S/I(G))=n-1$ if and only if $G$ is a complete bipartite graph. Moreover, in such case, $\reg(S/I(G))=1$. 
\end{proposition}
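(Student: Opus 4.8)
The plan is to reduce the projective-dimension statement to a purely combinatorial condition on the complement graph $G^c$ by applying Hochster's formula (Theorem~\ref{thm:hoch}) to the edge ideal $I(G)=I_\Delta$, where $\Delta$ is the independence complex of $G$. Using the relation $b_{n-1,u}(S/I(G))=b_{n-2,u}(I(G))$, I first observe that $\pd(S/I(G))=n-1$ holds exactly when $b_{n-2,u}(I(G))\neq 0$ for some squarefree monomial $u$. Hochster's formula identifies this with $\dim_\Bbbk \tilde H_{\deg u - n}(\Delta[u];\Bbbk)$, which can be nonzero only if $\deg u\geq n-1$. When $\deg u=n-1$ the relevant homology lives in degree $-1$ and vanishes, since every vertex of $G$ is a face of the independence complex (each singleton is independent), so $\Delta[u]\neq\{\emptyset\}$. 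Thus the only surviving contribution comes from $u=x_1\cdots x_n$, and I obtain
\[\pd(S/I(G))=n-1 \iff \tilde H_0(\Delta;\Bbbk)\neq 0.\]

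Next I would translate this homological condition into graph language. Because connectivity of a simplicial complex is detected by its $1$-skeleton, and the $1$-skeleton of $\Delta$ has all of $V(G)$ as its vertices and the non-edges of $G$ as its edges, that $1$-skeleton is precisely $G^c$. Hence $\Delta$ is disconnected exactly when $G^c$ is disconnected, and the proposition reduces to showing that, for a connected bipartite graph $G$, the complement $G^c$ is disconnected if and only if $G$ is complete bipartite.

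The combinatorial heart of the argument is this last equivalence, and it is where bipartiteness is essential. If $G=K_{A,B}$ is complete bipartite, then $G^c$ is the disjoint union of the complete graphs on $A$ and on $B$, so it is disconnected. Conversely, suppose $V(G)=V_1\sqcup V_2$ is a partition with no edge of $G^c$ across it, i.e.\ every pair with one endpoint in each $V_i$ is an edge of $G$. Fixing a bipartition $V(G)=A\cup B$, I would argue that neither $V_1$ nor $V_2$ can meet both $A$ and $B$: a vertex of $V_2$ adjacent to a vertex of $V_1\cap A$ must lie in $B$, while one adjacent to a vertex of $V_1\cap B$ must lie in $A$, which is impossible once $V_2\neq\emptyset$. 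Ruling out the case $V_1,V_2$ on the same side (which would force edges inside a part of the bipartition), I conclude $V_1=A$, $V_2=B$, and the complete join between them forces $G=K_{A,B}$.

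Finally, for the regularity claim, once $G=K_{A,B}$ is known to be complete bipartite, $G^c=K_{|A|}\sqcup K_{|B|}$ is a disjoint union of cliques, hence chordal, so $G$ itself is co-chordal and $\coc(G)=1$. Theorem~\ref{thm: co-chord number upper bound} then yields $\reg(S/I(G))\leq 1$, and since $I(G)$ is generated in degree two we get $\reg(S/I(G))=1$. The step I expect to be the main obstacle is the converse combinatorial argument: showing that the partition witnessing the disconnectedness of $G^c$ is forced to coincide with the bipartition of $G$. This is exactly the point at which the bipartite hypothesis, rather than mere connectedness, is indispensable, since the homological reduction itself works for arbitrary graphs.
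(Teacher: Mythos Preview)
Your argument is correct and follows the same Hochster-formula reduction as the paper: both proofs deduce from $\pd(S/I(G))=n-1$ that $b_{n-2,n}(I(G))\neq 0$ and hence $\tilde H_0(\Delta;\Bbbk)\neq 0$, i.e.\ the independence complex is disconnected, and then use the bipartite structure to force $G$ to be complete bipartite.

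The differences are cosmetic but worth noting. You pass through the $1$-skeleton to rephrase disconnectedness of $\Delta$ as disconnectedness of $G^c$, and then argue that a separating partition $V_1\sqcup V_2$ of $G^c$ must coincide with the bipartition $A\cup B$; the paper instead stays inside $\Delta$, observing that $A$ and $B$ are facets and that any non-edge $\{a,b\}$ with $a\in A$, $b\in B$ provides a chain $A,\{a,b\},B$ linking them, so $\Delta$ is connected unless $G=K_{A,B}$. Your version makes the role of $G^c$ explicit and is arguably cleaner combinatorics; the paper's facet-chain argument is shorter. For the forward direction and the regularity claim the paper simply cites \cite{J}, whereas you supply self-contained proofs (via $\tilde H_0(\Delta)\neq 0$ for $\pd$, and via $\coc(K_{A,B})=1$ together with Theorem~\ref{thm: co-chord number upper bound} for $\reg$), which is a small bonus. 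One minor point: your claim that $\pd(S/I(G))=n-1$ holds \emph{exactly} when some $b_{n-2,u}(I(G))\neq 0$ implicitly uses $\pd(S/I(G))\leq n-1$; this is available from Lemma~\ref{lem: coarse bound} (or from your own observation that $\tilde H_{-1}(\Delta[u])=0$ since singletons are faces), so you might say so explicitly.
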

\begin{proof}
	If $G$ is a complete bipartite graph, then $\pd(S/I(G))=n-1$ and $\reg(S/I(G))=1$ was proved in \cite{J}. Now, suppose that $\pd(S/I(G))=n-1$. Since $G$ is connected, $I(G)\neq (0)$. Then $n-2=\pd(S/I(G))-1=\pd(I(G))$. Then there exists a squarefree monomial $u$ such that $b_{n-2,u}(I(G))\neq 0$. Since $I(G)$ is generated in degree two, the degree of $u$ must be $n$. Theorem~\ref{thm:hoch} implies $\dim_\Bbbk\tilde{H}_{0}(\Delta; \Bbbk)\neq 0$ where $\Delta$ is the independence complex of $G$. Then $\Delta$ is disconnected. Let $V(G)=A\cup B$ be a bipartition of the vertex set of $G$. Since $G$ has no isolated vertices, both $A$ and $B$ are facets of $\Delta$. Assume for a contradiction $G$ is not complete bipartite. Then there exists $a\in A$ and $b\in B$ such that $\{a,b\}\notin E(G)$. We now show that $\Delta$ is connected. First, observe that $A,\{a,b\},B$ is a chain from $A$ to $B$. Let $\sigma$ and $\tau$ be two facets of $\Delta$. Since both $A$ and $B$ are facets, we may assume that $\sigma \cap A\neq \emptyset$ and $\tau \cap A\neq \emptyset$. Then $\sigma, A, \tau$ is a chain from $\sigma$ to $\tau$ which shows that $\Delta$ is connected, a contradiction.
\end{proof}
	Now that we have determined when the upper bounds in Lemma~\ref{lem: coarse bound} can be attained by connected bipartite graphs, our next goal is to show that for any integers $p$ and $q$ with $\lceil n/2 \rceil \leq p <n-1$ and $1\leq r <\lfloor n/2 \rfloor$ there exists a connected bipartite graph with $\pd(S/I(G))=p$ and $\reg(S/I(G))=r$. We will show the existence of such graph in two steps (Theorem~\ref{thm: bipartite construction} and Theorem~\ref{thm: trees (r,p) description}) as our construction depends on whether $p+r$ exceeds $n$ or not.
	\begin{theorem}\label{thm: bipartite construction}
		Let $n, p$ and $r$ be integers with $3\leq r \leq n/2-1$ and $n-r<p<n-1$. Then there exists a connected bipartite graph $G$ on $n$ vertices such that $\reg(S/I(G))=r$ and $\pd(S/I(G))=p$.
	\end{theorem}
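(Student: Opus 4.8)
The plan is to write down a single explicit graph $G$ and then read off $\reg(S/I(G))$ from the induced matching number together with a co-chordal cover, and $\pd(S/I(G))$ from $\tau_{\max}$ together with one application of Lemma~\ref{lem: key lemma}. Put $t = n-p-2$ and $s = n-2r-2$; the hypotheses $n-r<p<n-1$ and $r\le n/2-1$ force $0\le t\le r-3$ and $s\ge 0$. I would define $G$ on the bipartition $A\cup B$ with $A=\{a_1,\dots,a_r,c\}$ and $B=\{b_1,\dots,b_r,d,w_1,\dots,w_s\}$, and with edges: the matching $\{a_i,b_i\}$ for $i\in[r]$; the edges $\{d,a_i\}$ for all $i\in[r]$; the edges $\{c,b_i\}$ for $1\le i\le r-t$; and the whisker edges $\{c,w_\ell\}$ for $\ell\in[s]$. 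Thus the edges $e_i=\{a_i,b_i\}$ with $r-t<i\le r$ are ``loose'' (their endpoint $b_i$ has degree one). This $G$ is bipartite, connected (everything reaches $d$ through the $a_i$, and reaches $c$ through $b_1$), and has $|A|+|B|=2r+2+s=n$ vertices.

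For the regularity, note first that the induced subgraph on $\{a_1,\dots,a_r,b_1,\dots,b_r\}$ has exactly the edges $e_1,\dots,e_r$, since the hubs $c,d$ are excluded; hence $\{e_1,\dots,e_r\}$ is an induced matching and $\reg(S/I(G))\ge\indm(G)\ge r$ by Theorem~\ref{thm: reg lower bound for any graph}. For the reverse inequality I would cover $E(G)$ by $r$ co-chordal subgraphs: the broom $H_1$ consisting of the star at $d$ together with $e_1$; the broom $H_2$ consisting of the star at $c$ (including all whisker edges) together with $e_2$; and the single edges $e_3,\dots,e_r$. Each $H_i$ is co-chordal, because the complement of a broom (a star with one subdivided leaf) is chordal and single edges are trivially co-chordal, so $\coc(G)\le r$ and $\reg(S/I(G))\le r$ by Theorem~\ref{thm: co-chord number upper bound}. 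Therefore $\reg(S/I(G))=r$.

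For the projective dimension, the set $\{c,d,b_{r-t+1},\dots,b_r\}$ is independent and dominating, of size $t+2=n-p$, so its complement is a minimal vertex cover of size $p$ and $\pd(S/I(G))\ge\tau_{\max}(G)\ge p$ by Theorem~\ref{thm: pd lower bound for any graph}. The crux is the upper bound $\pd(S/I(G))\le p$, which I would get from Lemma~\ref{lem: key lemma}(2) applied at the hub $c$. Deleting $c$ makes every $b_i$ a leaf, so $G-c$ is a forest (a spider with $r$ legs of length two centred at $d$, plus $s$ isolated whiskers), and $G-N[c]$ is also a forest (the star at $d$ with its $t$ loose leaves $b_i$ attached). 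By Theorem~\ref{thm: forest pd} their projective dimensions equal their respective $\tau_{\max}$, which a direct count gives as $\pd(S/I(G-c))=r+1$ and $\pd(S/I(G-N[c]))=r$. Since $|N(c)|=(r-t)+s$, Lemma~\ref{lem: key lemma}(2) yields $\pd(S/I(G))\le\max\{(r+1)+1,\ r+(r-t+s)\}=\max\{r+2,\ 2r-t+s\}$, and because $2r-t+s=p$ while $r+2\le p$ (as $t\le r-3$ and $s\ge 0$), this maximum is exactly $p$. Hence $\pd(S/I(G))=p$.

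The main obstacle is precisely this last bound: $G$ is not a forest, so $\pd$ may exceed $\tau_{\max}$ and cannot be read off the vertex cover alone, and peeling a mere leaf only yields the useless estimate $n-2$. The decisive idea is to peel the high-degree hub $c$, which simultaneously renders both $G-c$ and $G-N[c]$ forests, so that Theorem~\ref{thm: forest pd} applies on both sides and the estimate in Lemma~\ref{lem: key lemma}(2) collapses to $p$. The only remaining routine verifications are that the two brooms $H_1,H_2$ are genuinely co-chordal and that the displayed independent set is maximal.
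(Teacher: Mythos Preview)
Your argument is correct and follows essentially the same blueprint as the paper: construct an explicit bipartite graph with an $r$-matching and two hubs, then pin down $\reg$ via the induced matching lower bound and $\pd$ via the $\tau_{\max}$ lower bound together with Lemma~\ref{lem: key lemma} applied at a hub. The differences are minor but worth noting. First, your graph is not literally the paper's: you attach the extra leaves $w_\ell$ to the \emph{partial} hub $c$, whereas the paper attaches its leaves $z_i$ to the \emph{full} hub $y$; correspondingly, you peel at $c$ for the $\pd$ upper bound while the paper peels at $y$. Both choices work because in each case the deletion and closed-neighbourhood deletion are forests, so Theorem~\ref{thm: forest pd} applies. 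Second, for the regularity upper bound you invoke Woodroofe's co-chordal cover bound (Theorem~\ref{thm: co-chord number upper bound}) with an explicit cover by two brooms and $r-2$ single edges, whereas the paper instead applies Lemma~\ref{lem: key lemma} at $x$ and reduces to the forest case via Theorem~\ref{thm: forest reg}. Your route here is arguably cleaner, since it avoids a second use of the short exact sequence and the verification that brooms are co-chordal is elementary; the paper's route has the virtue of using only one analytic tool (Lemma~\ref{lem: key lemma}) throughout.
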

	\begin{proof}
		Since $r\leq n/2-1$ there exists an integer $t\geq 0$ such that $n=2r+2+t$. Moreover, $n-r<p$ implies that $r+t+3\leq p$. Thus, we may assume that $p=a+r+t$ for some $a\geq 3$. Note that $a\leq r$ because $p<n-1$. Let $G$ be the graph with the vertex set \[V(G)=\{u_1,\dots,u_r\}\cup \{v_1,\dots,v_r\}\cup \{x,y,z_1,\dots ,z_t\}  \]
		and the edge set
		\[E(G)=\{\{u_i,v_i\}: i\in [r]\} \cup \{\{x,v_i\}: i\in[a]\}\cup \{\{y,u_i\}: i\in [r]\} \cup\{\{y,z_i\}: i\in[t]\}.\]
		One can easily see that $G$ is a connected bipartite graph on $n$ vertices. We will now show that $G$ possesses the properties stated in the theorem. Observe that $\{\{u_i,v_i\}: i\in [r]\}$ is an induced matching of $G$. Therefore $\reg(S/I(G))\geq \indm(G)\geq r$ by Theorem~\ref{thm: reg lower bound for any graph}. On the other hand, by Lemma~\ref{lem: key lemma} we have
		\[\reg(S/I(G))\leq \max\{\reg(S/I(G-x)),\, \reg(S/I(G-N[x]))+1\}.\]
		Observe that $G-x$ is a tree with  $\indm(G-x)=r$ and $G-N[x]$ is a tree with $\indm(G-N[x])=\max\{1,r-a\}$. Therefore, by Theorem~\ref{thm: forest reg} we get
		\[\reg(S/I(G))\leq \max\{r,\, \max\{1, r-a\}+1\}\leq r \]
		as desired. To evaluate the projective dimension, first observe that by Theorem~\ref{thm: pd lower bound for any graph} \[\pd(S/I(G))\geq \tau_{\max}(G)\geq a+r+t.\] 
		On the other hand, by Lemma~\ref{lem: key lemma} we have
		\[\pd(S/I(G))\leq \max\{\pd(S/I(G-N[y]))+|N(y)|,\, \pd(S/I(G-y))+1\}.\]
		Moreover, $G-y$ is a forest with $\tau_{\max}(G-y)=r+1$ and $G-N[y]$ is a forest with $\tau_{\max}(G-N[y])=a$. Since $y$ has exactly $r+t$ neighbors and $a\geq 3$, it follows from Theorem~\ref{thm: forest pd} that 
		\[\pd(S/I(G))\leq \max\{a+r+t,\, r+2\}    \leq a+r+t\] which completes the proof.
	\end{proof}
	
	
		\begin{remark}\label{rk: minimality} Let $C$ be a minimal vertex cover of $G$. Then by the minimality, for every $v\in C$, there exists an edge $e$ in $G$ such that $e\cap C=\{v\}$.
	\end{remark}

	\begin{remark}\label{rk: forest max number of edges}
		If $H$ is a forest on $n$ vertices, then it has at most $n-1$ edges.
	\end{remark}
	
	\begin{lemma}\label{lem: trees lemma}
		Let $G$ be a tree on $n$ vertices with $\indm(G)=r$ and $\tau_{\max}(G)=p$. Then $p\leq n-r$.
	\end{lemma}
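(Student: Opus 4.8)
The plan is to recast the inequality in terms of independent sets and then reduce everything to a single edge-counting estimate inside a forest. Recall that a set $C$ is a minimal vertex cover of $G$ exactly when its complement $D=V(G)\setminus C$ is a maximal independent set, and that (by Remark~\ref{rk: minimality}) every vertex of $C$ then has a neighbor in $D$. So I would fix a minimal vertex cover $C$ with $|C|=\tau_{\max}(G)=p$, put $D=V(G)\setminus C$, and aim to prove $|D|\ge r$; since then $n-p=|D|\ge r$, this gives $p\le n-r$. In fact I would prove the cleaner statement that \emph{every} maximal independent set of a forest has at least $\indm(G)$ elements.

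Fix a maximum induced matching $M=\{e_1,\dots ,e_r\}$ with vertex set $W=\bigcup_{i=1}^r e_i$, so $|W|=2r$, and note that since $M$ is induced the only edges of $G$ inside $W$ are the $r$ matching edges themselves. Because $D$ is independent, each $e_i$ has at most one endpoint in $D$; I would call $e_i$ of \emph{Type II} if it has exactly one such endpoint and of \emph{Type I} if it has none, writing $s_2$ and $s_1=r-s_2$ for their counts. Setting $T=N[W]\cap D$ (the dominators of the matching lying in $D$), it suffices to show $|T|\ge r$, since $T\subseteq D$. Each Type II edge already contributes its own endpoint to $W\cap D\subseteq T$, so $|W\cap D|=s_2$, and it remains to control $|T'|$, where $T'=T\setminus W$.

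The decisive step is the bookkeeping for the Type I edges. Each of the $2s_1$ endpoints of Type I edges lies in $C$, hence has a neighbor in $D$, necessarily in $T$. Such a neighbor cannot lie in $W\cap D$: an edge from a Type I endpoint to a vertex of $W\cap D$ would be an edge inside $W$, hence a matching edge, yet the matching partner of a Type I endpoint lies in $C$, not $D$ — so the neighbor lies in $T'$. I would then look at the induced subgraph $H$ on the $2s_1$ Type I endpoints together with $T'$. This $H$ is a forest; its only edges among the Type I endpoints are the $s_1$ matching edges, $T'\subseteq D$ carries no internal edge, and each of the $2s_1$ Type I endpoints supplies a further edge to $T'$, these being pairwise distinct as they have distinct endpoints in $W$. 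Hence $|E(H)|\ge 3s_1$ while $|V(H)|=2s_1+|T'|$. If $s_1\ge 1$, the forest bound $|E(H)|\le |V(H)|-1$ of Remark~\ref{rk: forest max number of edges} forces $|T'|\ge s_1+1$, so $|T|=s_2+|T'|\ge r+1$; if $s_1=0$ then $|T|=s_2=r$ outright. Either way $|T|\ge r$.

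I expect the main obstacle to be precisely the asymmetry between the two edge types: a Type II edge is ``self-dominating,'' so a crude global count over all of $W$ is too lossy to beat $r$ once many edges are of Type II. The remedy is to throw away the Type II vertices and run the forest edge-count only on the Type I endpoints and their \emph{external} dominators, with the induced-matching hypothesis doing the essential work of forbidding a Type I endpoint from being dominated inside $W$. It is worth flagging that the forest hypothesis is genuinely needed and not merely convenient: the analogue fails once triangles are allowed (a single vertex joined to both endpoints of $r$ disjoint edges has induced matching number $r$ but admits a dominating independent set of size $1$), and the tree structure enters decisively through the bound $|E(H)|\le |V(H)|-1$.
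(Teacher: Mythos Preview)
Your proof is correct and follows essentially the same approach as the paper's: both partition the matching edges according to whether both endpoints lie in the minimal vertex cover $C$, observe via minimality that each ``Type~I'' endpoint has a neighbor in $D=V(G)\setminus C$ lying outside $W$ (using that $M$ is induced), and then apply the forest edge bound $|E(H)|\le |V(H)|-1$ to the induced subgraph on the Type~I endpoints together with these external neighbors to force $|T'|\ge s_1+1$. The only cosmetic differences are that the paper argues by contradiction (so the case $s_1=0$ is excluded by hypothesis) and phrases everything in vertex-cover language, whereas you work directly and pass to the complementary maximal independent set.
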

	
	\begin{proof}
		Assume for a contradiction there exists a minimal vertex cover $C$ of cardinality at least $n-r+1$. Let $M=\{e_1,\dots ,e_r\}$ be an induced matching such that $\cup_{i=1}^ae_i\subseteq C$ and $e_j \not\subseteq C$ for each $j=a+1,\dots ,r$. Since $C$ contains at least $n-r+1$ vertices it follows that $a\geq 1$. Let $U=V(G)\setminus \cup_{i=1}^re_i$. For each $i\in[a]$, let $e_i=\{x_i,y_i\}$. By Remark~\ref{rk: minimality}, for each $i\in [a]$ there exists $u_i,v_i\in U$ such that $\{x_i,u_i\}, \{y_i,v_i\}\in E(G)$ and $u_i,v_i\notin C$.
		
		We claim that $|\cup_{i=1}^a\{u_i,v_i\}|\geq a+1$. Let $H$ be the induced subgraph of $G$ on the vertices $(\cup_{i=1}^ae_i) \cup (\cup_{i=1}^a\{u_i,v_i\})$. Since $H$ is a forest, $3a \leq E(H)\leq V(H)-1$ by Remark~\ref{rk: forest max number of edges}. This proves the claim as $H$ has at least $3a+1$ vertices.
		
		Now, we conclude that $U\cap C$ has at most $n-2r-a-1$ elements. On the other hand, $(V(G)\setminus U)\cap C$ has exactly $r+a$ elements. Thus $C$ has at most $n-r-1$ elements, a contradiction.
	\end{proof}
Let $\tree(n)$ denote the set of all trees on the vertices $\{x_1,\dots ,x_n\}$.  We define
\[\displaystyle \tree_{\pd}^{\reg}(n)=\{(\pd(S/I(G)),\reg(S/I(G))): G \in \tree(n)\}
\]
which consists of all sizes of Betti tables of $S/I(G)$ as $G$ ranges over all trees on $n$ vertices. 

	\begin{theorem}\label{thm: trees (r,p) description}
		Let $G$ be a tree on $n\geq 4$ vertices. Then 
	 \[\tree^{\reg}_{\pd}(n)=\{(p,r)\in \mathbb{Z}^2 : 1\leq r <  n/2, \, \lceil n/2 \rceil \leq p \leq n-r \}.  \]
	\end{theorem}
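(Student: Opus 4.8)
The plan is to first translate the statement into a purely combinatorial one. Since every tree is a forest, Theorem~\ref{thm: forest pd} and Theorem~\ref{thm: forest reg} give $\pd(S/I(G))=\tau_{\max}(G)$ and $\reg(S/I(G))=\indm(G)$ for every tree $G$. Hence the theorem amounts to the assertion that the set of pairs $(\tau_{\max}(G),\indm(G))$, as $G$ ranges over all trees on $n$ vertices, equals the right-hand side, and I would prove the two inclusions separately.

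For the inclusion ``$\subseteq$'', let $G$ be a tree on $n\geq 4$ vertices and write $r=\indm(G)$, $p=\tau_{\max}(G)$. Since $G$ is connected it has an edge, so $r\geq 1$; and since $G$ is a connected bipartite graph with no isolated vertices, Lemma~\ref{lem: coarse bound}(1) gives $p\geq\lceil n/2\rceil$. The bound $p\leq n-r$ is exactly Lemma~\ref{lem: trees lemma}. Finally I must verify $r<n/2$: if $n$ is odd this is immediate from $r=\indm(G)\leq\mat(G)\leq\lfloor n/2\rfloor<n/2$ (Theorem~\ref{thm: matching number upper bound}), while if $n$ is even it follows from Corollary~\ref{cor: even regularity}, which gives $r=\reg(S/I(G))<n/2$ because $G$ is connected. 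Thus every pair lies in the stated set.

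The substance is the reverse inclusion ``$\supseteq$'': for each admissible $(p,r)$, that is $1\leq r<n/2$ and $\lceil n/2\rceil\leq p\leq n-r$, I must exhibit a tree on $n$ vertices with $\indm(G)=r$ and $\tau_{\max}(G)=p$. It is convenient (though not essential) to set $s=n-p$, so that, recalling the elementary fact that minimal vertex covers are the complements of maximal independent sets and hence $\tau_{\max}(G)=n-i(G)$ where $i(G)$ is the minimum size of a maximal independent set, the target becomes a tree with $\indm(G)=r$ and $i(G)=s$, and the constraints become $r\leq s\leq\lfloor n/2\rfloor$. My construction would be a caterpillar: take a spine path on $c_1,\dots,c_{2r}$, whose independence number is $r$, and attach pendant leaves to the spine vertices. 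Two ideas drive it. First, the $r$ pendant edges at the odd-indexed spine vertices $c_1,c_3,\dots,c_{2r-1}$ form an induced matching (those vertices are pairwise non-adjacent), so $\indm(G)\geq r$ by Theorem~\ref{thm: reg lower bound for any graph}; conversely no induced matching is larger, since any two pendant edges placed at adjacent spine vertices cannot both occur in an induced matching, and assigning each matching edge a spine vertex shows the size is bounded by the spine's independence number $r$. I would confirm $\indm(G)=r$ by combining this with the regularity bound of Lemma~\ref{lem: key lemma}, exactly as in the proof of Theorem~\ref{thm: bipartite construction}. Second, the closed neighbourhoods of leaves on distinct spine vertices are disjoint, so each leaf-bearing spine vertex forces a separate element into any dominating set; this lets me drive $i(G)$ from $r$ (leaves only on the odd vertices, which are then dominated ``for free'' by an independent set of the spine) up to $\lfloor n/2\rfloor$ (a leaf on every spine vertex), with intermediate values obtained by choosing which spine vertices carry leaves. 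The value $\tau_{\max}(G)=p$ would be pinned by Theorem~\ref{thm: pd lower bound for any graph} together with an explicit minimal vertex cover of size $p$, against a matching upper bound from the projective-dimension part of Lemma~\ref{lem: key lemma}.

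The hard part is entirely in this construction: I need a single leaf-distribution scheme with two free parameters, namely the total number of leaves (to fix $n$) and their placement along the spine (to fix $i(G)$, equivalently $\tau_{\max}$), that simultaneously realises $\indm(G)=r$ and attains every value of $p$ in $[\lceil n/2\rceil,\,n-r]$. The genuinely delicate points are computing the exact value of $\tau_{\max}(G)$ for non-uniform leaf placements, where the minimum maximal independent set must be optimised over independent sets of the spine, and handling the boundary regime where $n$ is small relative to $r$ (roughly $2r<n<3r$): there a $2r$-vertex spine is already too large, and I expect to use instead a spider with $r$ legs of length two together with a few extra leaves as a separate base construction. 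I would also dispose of the parity of $n$ and the extreme endpoints $p=n-r$ and $p=\lceil n/2\rceil$ as explicit special cases.
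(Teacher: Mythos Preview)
Your inclusion ``$\subseteq$'' is correct and matches the paper exactly: it cites Lemma~\ref{lem: coarse bound}, Corollary~\ref{cor: even regularity}, and Lemma~\ref{lem: trees lemma}, and then invokes Theorems~\ref{thm: forest pd} and~\ref{thm: forest reg} to reduce everything to $\tau_{\max}$ and $\indm$.

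For ``$\supseteq$'' you diverge from the paper, and here the proposal is both more complicated and incomplete. The paper does \emph{not} use caterpillars. It uses a single spider-type tree that works uniformly for every admissible $(p,r)$: a centre $x$ with $r$ legs $x\text{--}v_i\text{--}u_i$, together with $b$ pendant leaves $y_1,\dots,y_b$ at $x$ and $t$ pendant leaves $z_1,\dots,z_t$ at $v_r$, where $a=n-2r$, $b=r+a-p$, and $t=a-b-1$. The induced matching $\{u_iv_i\}$ gives $\indm(G)=r$ immediately, and $\tau_{\max}(G)=p$ is verified by exhibiting the minimal cover $\{u_1,\dots,u_r,x,z_1,\dots,z_t\}$ and then checking, via a three-case split on whether $v_r\in C$ and whether $x\in C$, that no minimal cover is larger. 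No recourse to Lemma~\ref{lem: key lemma} is needed at this stage; the argument is purely combinatorial.

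Your caterpillar scheme, by contrast, leaves the hard step open. You correctly observe that a $2r$-spine forces $\indm=r$ once an independent $r$-set of spine vertices carries leaves, but you do not show that varying the leaf distribution actually realises \emph{every} value of $\tau_{\max}$ in $[\lceil n/2\rceil,\,n-r]$; for instance, with spine $c_1c_2c_3c_4$ and one leaf on each odd vertex ($n=6$, $r=2$) one gets $\tau_{\max}=4$, not $3$, so the claim that ``leaves only on odd vertices'' yields the bottom of the range is stated with the wrong orientation and the interpolation remains to be carried out. You also already need a separate construction when $2r<n<3r$, and the ``spider with $r$ legs of length two plus extra leaves'' you reach for there is precisely the paper's tree. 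Since that spider handles all $(p,r)$ at once with an easy $\tau_{\max}$ computation, the caterpillar detour buys nothing; you would do better to drop it and run the whole argument with the spider.
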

	\begin{proof}
		 By Lemma~\ref{lem: coarse bound},  Corollary~\ref{cor: even regularity}, and Lemma~\ref{lem: trees lemma} we have
		 \[\tree^{\reg}_{\pd}(n)\subseteq \{(p,r)\in \mathbb{Z}^2 : 1\leq r <  n/2, \, \lceil n/2 \rceil \leq p \leq n-r \}.  \]
		To show the equality, let $1\leq r < n/2$ and $\lceil n/2 \rceil \leq p \leq n-r$ be fixed. By Theorem~\ref{thm: forest pd} and Theorem~\ref{thm: forest reg} it suffices to find a tree $G$ on $n$ vertices with $\indm(G)=r$ and $\tau_{\max}(G)=p$. Since $n> 2r$ we may assume that $n=2r+a$ for some $a\geq 1$. Then since $p\leq n-r$ we obtain $p-a\leq r$. So, we may also assume that $r=p-a+b$ for some $b\geq 0$. Since $p\geq \lceil n/2 \rceil$ we get $p-r\geq 1$. Hence $a-b\geq 1$. Let $t=a-b-1$. Let $G$ be the graph on the vertex set 
		\[V(G)=\{u_1,\dots u_r,v_1,\dots,v_r\} \cup \{x,y_1,\dots,y_b\}\cup\{z_1,\dots ,z_t\}\]
		and the edge set
		\[E(G)=\{\{u_i,v_i\}: i\in [r]\}\cup \{\{x,v_i\}: i\in[r]\}\cup \{\{x,y_i\}:i\in [b]\} \cup \{\{v_r,z_i\}: i\in [t]\}.\] 
		It is clear that $G$ is a tree on $n$ vertices. It is not hard to see that $\{\{u_i,v_i\}:i\in [r]\}$ is an induced matching of maximum cardinality. Moreover, $\{u_1,\dots, u_r, x, z_1,\dots, z_t\}$ is a minimal vertex cover of cardinality $p$. We will now show that $\tau_{\max}(G)=p$. Let $C$ be a minimal vertex cover of $G$. We consider the following cases.
		
		\textit{Case 1}: Suppose that $v_r\notin C$. Then $\{u_r,z_1,\dots ,z_t,x\}\subset C$. Moreover, $y_i\notin C$ for every $i\in [b]$. This implies that for each $i\in [r]$, either $u_i\in C$ or $v_i\in C$, but not both. Hence $|C|=p$.
		
		\textit{Case 2}: Suppose that $v_r \in C$. Then $u_r\notin C$ and $z_i\notin C$ for each $i\in [t]$. We consider two cases:
		
		\textit{Case 2.1}: Suppose that $x\in C$. Then $y_i\notin C$ for each $i\in [b]$. This implies that for each $i\in [r]$, either $u_i\in C$ or $v_i\in C$, but not both. Hence $|C|=r+1\leq p$ as desired.
		
		\textit{Case 2.2}: Suppose that $x\notin C$. Then $C=\{v_1,\dots,v_r, y_1,\dots ,y_b\}$. Thus we get
		\[|C|=r+b=r+(r-p+a)=2r-p+a=2r-p+(n-2r)=n-p \leq p\]
		where the last inequality follows from the assumption that $\lceil n/2 \rceil \leq p$.
\end{proof}
For any positive integer $n$ let $\bpt(n)$ denote the set of connected bipartite graphs on the vertices $\{x_1,\dots ,x_n\}$. We define
\[\displaystyle \bpt_{\pd}^{\reg}(n)=\{(\pd(S/I(G)),\reg(S/I(G))): G \in \bpt(n)\}.
\]	Finally, we arrived at our main result:
\begin{theorem}\label{thm:main theorem}
	Let $n\geq 4$ be an integer. Then
	\[\displaystyle \bpt_{\pd}^{\reg}(n)=\{(p,r)\in \mathbb{Z}^2 : 1\leq r <  \Big\lfloor \frac{n}{2}\Big\rfloor, \, \Big\lceil \frac{n}{2} \Big\rceil \leq p \leq n-2 \}\cup \{(n-1,1)\} \cup A_n \]
	where $A_n=\emptyset$ if $n$ is even and, $A_n=\{(\lceil n/2 \rceil, \lfloor n/2 \rfloor)\}$ if $n$ is odd.
\end{theorem}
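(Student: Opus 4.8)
The plan is to establish the two inclusions separately. Almost all of the analytic content is already isolated in the preceding results, so the main theorem is essentially an assembly of those pieces, organized by a parity- and region-sensitive case analysis.

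For the inclusion $\bpt_{\pd}^{\reg}(n)\subseteq(\text{RHS})$, I would begin from the coarse rectangle $\lceil n/2\rceil\leq p\leq n-1$, $1\leq r\leq\lfloor n/2\rfloor$ supplied by Lemma~\ref{lem: coarse bound}, and then carve away its two extreme edges. On the edge $p=n-1$, Proposition~\ref{prop: pd max value} forces $G$ to be complete bipartite and hence $r=1$, so the only surviving point is $(n-1,1)$. On the edge $r=\lfloor n/2\rfloor$ I split on parity: for even $n$ Corollary~\ref{cor: even regularity} rules this edge out entirely (matching $A_n=\emptyset$), while for odd $n$ Corollary~\ref{cor: odd regularity} pins $p=\lceil n/2\rceil$, giving precisely the point of $A_n$. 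Every pair not on these edges satisfies $p\leq n-2$ and $r\leq\lfloor n/2\rfloor-1$ and so lies in the main rectangle, finishing this direction.

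For the reverse inclusion I must realize every right-hand-side pair by a connected bipartite graph. The two corner cases are immediate: $(n-1,1)$ is realized by any complete bipartite graph (e.g.\ the star $K_{1,n-1}$) via Proposition~\ref{prop: pd max value}, and for odd $n$ the point $(\lceil n/2\rceil,\lfloor n/2\rfloor)$ is realized by a tree, being the extreme case $r=\lfloor n/2\rfloor$, $p=n-r=\lceil n/2\rceil$ of Theorem~\ref{thm: trees (r,p) description}. For a general pair $(p,r)$ in the main rectangle I would realize it by a tree when $p\leq n-r$ and by the construction of Theorem~\ref{thm: bipartite construction} when $p>n-r$: Theorem~\ref{thm: trees (r,p) description} produces a tree for all $\lceil n/2\rceil\leq p\leq n-r$, while Theorem~\ref{thm: bipartite construction} produces a connected bipartite graph for all $3\leq r\leq n/2-1$ and $n-r<p\leq n-2$. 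Since trees are connected bipartite graphs, these two sources glue along the seam $p=n-r$ to cover the full strip $\lceil n/2\rceil\leq p\leq n-2$ for each $r\geq3$.

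The argument is bookkeeping rather than a single deep step, so the point demanding care is that the two realizations tile the main rectangle with neither gaps nor omitted corners. The delicate spots are the seam at $p=n-r$, where I must check that the tree range $[\lceil n/2\rceil,n-r]$ and the construction range $[n-r+1,n-2]$ abut exactly (the bound $r\leq\lfloor n/2\rfloor-1$ guarantees $n-r>\lceil n/2\rceil$, so the tree range is nonempty), and the low rows $r\in\{1,2\}$, where one must notice that trees already reach $p=n-r\geq n-2$, so the hypothesis $r\geq3$ of Theorem~\ref{thm: bipartite construction} is never invoked. Finally I would confirm that the three carving steps of the first inclusion line up exactly with the three realization sources of the second, and that both respect the parity-dependent set $A_n$.
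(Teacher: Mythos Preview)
Your proposal is correct and follows essentially the same approach as the paper: both directions are assembled from Lemma~\ref{lem: coarse bound}, Corollaries~\ref{cor: even regularity} and~\ref{cor: odd regularity}, Proposition~\ref{prop: pd max value}, and Theorems~\ref{thm: bipartite construction} and~\ref{thm: trees (r,p) description}. You have simply made explicit the bookkeeping (the seam at $p=n-r$ and the observation that the rows $r\in\{1,2\}$ are already covered by trees) that the paper's terse proof leaves to the reader.
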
	
\begin{proof}
	Keeping Lemma~\ref{lem: coarse bound} in mind, first observe that the set $A_n$ is determined by Corollary~\ref{cor: even regularity} and Corollary~\ref{cor: odd regularity}. By Proposition~\ref{prop: pd max value}, $(n-1,1)$ is the only pair in $\bpt_{\pd}^{\reg}(n)$ of the form $(n-1,r)$. The rest of the proof follows from Theorem~\ref{thm: bipartite construction} and Theorem~\ref{thm: trees (r,p) description}.
\end{proof}	
	

\end{document}